\documentclass[11pt,a4paper]{article}
\usepackage[latin1]{inputenc}
\usepackage{amsmath}
\usepackage{amsthm}
\usepackage{amsfonts}
\usepackage{amsfonts,amsthm,latexsym,amsmath,amssymb,amscd,epsfig,psfrag,enumerate}
\usepackage{graphics,graphicx, bezier, float, color, hyperref}
\usepackage{amssymb,url}
\usepackage{multienum}
\usepackage[table]{xcolor}
\usepackage{multicol,multirow}
\usepackage{graphicx}
\usepackage{fancyvrb}
\usepackage{tabularx}
\usepackage{array} 
\usepackage{tcolorbox}
\usepackage{parskip}
\usepackage[toc,page]{appendix}
\usepackage[top=2.7cm, bottom=2.7cm, left=1.5cm, right=1.5cm]{geometry}
\usepackage{xcolor}
\usepackage[none]{hyphenat}[section]
\usepackage{blkarray}
\newtheorem{theorem}{Theorem}[section]
\newtheorem{lemma}[theorem]{Lemma}

\newtheorem{definition}[theorem]{Definition}

\numberwithin{equation}{section}
\numberwithin{table}{section}
\numberwithin{figure}{section}

\title{On mixed $b$-concatenations of Fibonacci and Lucas numbers that are Lucas numbers}
\author{Herbert Batte$^{1}$ and Prosper Kaggwa$^{2,*}$ }
\date{}

\begin{document}
	\maketitle
	\abstract{ Let $(F_n)_{n\ge0}$ and $(L_n)_{n\ge0}$ denote the sequences of Fibonacci and Lucas numbers respectively. This paper determines all Lucas numbers that can be represented as base $b$ mixed concatenations of a Fibonacci number and a Lucas number. Mathematically, we study of two Diophantine equations $L_n=b^dL_m+F_k$ and $L_n=b^dF_m+L_k$, where $d$ is the number of digits of $F_k$ or $L_k$ in base $b$. To tackle these equations, we combine tools from Diophantine approximation on non-zero linear forms in logarithms and reduction methods based on continued fractions. This allows us to prove that only finitely many such Lucas numbers exist. } 
	
	{\bf Keywords and phrases}: Lucas numbers; Fibonacci numbers; Concatenations; Linear forms in logarithms; Baker-Davenport reduction method.
	
	{\bf 2020 Mathematics Subject Classification}: 11B39, 11D61, 11J86
	
	\thanks{$ ^{*} $ Corresponding author}
	
	\section{Introduction}
	\subsection{Background}
	Let $(F_n)_{n\ge0}$ and $(L_n)_{n\ge0}$ be the sequences of Fibonacci and Lucas numbers respectively defined by $$F_0=0, \quad  F_1=1,  \quad F_{n+2}=F_{n+1}+F_n,$$ and $$L_0=2, \quad  L_1=1, \quad L_{n+2}=L_{n+1}+L_n,$$ for $n\ge0.$ These two sequences satisfy the same recurrence relation but have different initial terms. 
	
	Recent work has investigated problems in which a term of a linear recurrence sequence is obtained by concatenating terms within the same sequence. In particular, Banks and Lucas proved in \cite{banks} that if $U_n$ is any non-degenerate binary recurrent sequence of integers, then only finitely many terms of $U_n$ can be expressed as concatenations of two or more of its own terms. As a special case, they showed that $13,21$ and $55$ are the only Fibonacci numbers which are concatenations of two Fibonacci numbers. In \cite{alan}, the authors determined all Fibonacci numbers that can be expressed as  concatenations involving a Fibonacci and a Lucas number. In \cite{AT}, the authors studied Diophantine equations involving Pell and Pell-Lucas numbers arising from base $b$ concatenations of the form 
	\[
	P_n = b^d Q_m + Q_k \quad \text{and} \quad Q_n = b^d P_m + P_k.
	\]
	Similarly in \cite{KM}, the authors considered mixed concatenation-type equations in which Pell numbers are expressed as base $b$ concatenations of Pell and Pell--Lucas numbers. In both papers, all solutions were completely determined for bases $2 \leq b \leq 10$.
	Motivated by the work in \cite{AT} and \cite{KM}, we determine all Lucas numbers which can be written as base $b$ mixed concatenations of a Fibonacci and a Lucas number. For this reason, we study the Diophantine equations
	 \begin{equation}\label{main1}
		L_n=L_m\cdot b^d +F_k
	\end{equation} 
	 and 
	 \begin{equation}\label{main2}
		L_n=F_m\cdot b^d +L_k,
	\end{equation} 
	in non-negative integers $n$, $m$ and $k$, where $d$ denotes the number of digits of $F_k$ and $L_k$ in base $b$ respectively, for $2\leq b \leq 10.$
	
	We prove the following results. 
	\subsection{Main Results}
	\begin{theorem}\label{thm1}
		For $2\le b\le10$, the only solutions to the Diophantine equation \eqref{main1} are;
\begin{table}[H]
	\centering
	\begin{tabularx}{\textwidth}{|c|>{\raggedright\arraybackslash}X|}
			\hline
			\textbf{Base $b$} & \textbf{Solutions $(n,m,k,b,d)$} \\
			\hline
			2 & $(0,1,0,2,1)$, $(2,1,1,2,1)$, $(2,1,2,2,1)$, $(3,0,0,2,1)$, $(4,1,4,2,2)$, $(4,2,1,2,1)$,\newline $(4,2,2,2,1)$, $(5,0,4,2,2)$, $(6,3,3,2,2)$, $(7,1,7,2,4)$, $(7,2,5,2,3)$, $(8,5,4,2,2)$ \\
			\hline
			3 & $(2,1,0,3,1)$, $(3,1,1,3,1)$, $(3,1,2,3,1)$, $(4,0,1,3,1)$, $(4,0,2,3,1)$, $(5,2,3,3,1)$ \\
			\hline
			4 & $(3,1,0,4,1)$, $(4,1,4,4,1)$, $(5,0,4,4,1)$, $(6,3,3,4,1)$, $(7,1,7,4,2)$, $(7,4,1,4,1)$,\newline $(7,4,2,4,1)$, $(8,5,4,4,1)$ \\
			\hline
			5 & $(4,1,3,5,1)$, $(5,0,1,5,1)$, $(5,0,2,5,1)$, $(6,2,4,5,1)$ \\
			\hline
			6 & $(4,1,1,6,1)$, $(4,1,2,6,1)$, $(5,1,5,6,1)$, $(6,2,0,6,1)$, $(7,3,5,6,1)$, $(8,4,5,6,1)$,\newline $(13,0,11,6,3)$ \\
			\hline
			7 & $(4,1,0,7,1)$, $(7,3,1,7,1)$, $(7,3,2,7,1)$ \\
			\hline
			8 & $(5,1,4,8,1)$, $(6,0,3,8,1)$, $(7,2,5,8,1)$ \\
			\hline
			9 & $(5,1,3,9,1)$, $(6,0,0,9,1)$, $(7,2,3,9,1)$ \\
			\hline
			10 & \mbox{$(5,1,1,10,1)$}, $(5,1,2,10,1)$, $(6,1,6,10,1)$ \\
			\hline
		\end{tabularx}
	\end{table}
	
		 In particular, the only Lucas numbers satisfying equation \eqref{main1} when $b=10$ (decimal base) are
		\[
		\begin{aligned}
			L_5 &= 10^1 \cdot L_1 + F_1 =10^1 \cdot L_1 + F_2 = 11,\quad\text{and}\\
			L_6 &= 10^1 \cdot L_1 + F_6 = 18.
		\end{aligned}
		\]				
\end{theorem}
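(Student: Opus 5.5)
We follow the standard scheme for concatenation equations: a computer search for small indices, then two linear forms in logarithms to bound $n$, and finally Baker--Davenport reduction. Write $\alpha=(1+\sqrt5)/2$ and $\beta=-\alpha^{-1}$, so that $L_n=\alpha^n+\beta^n$ and $F_n=(\alpha^n-\beta^n)/\sqrt5$, with $\alpha^{n-2}\le F_n\le\alpha^{n-1}$ and $\alpha^{n-1}\le L_n\le 2\alpha^n$. Since $d$ is the number of base-$b$ digits of $F_k$, we have $b^{d-1}\le F_k<b^d$, hence $d\log b\approx (k-1)\log\alpha+O(1)$. The relation $L_n=b^dL_m+F_k$ gives $L_n> b^dL_m>F_kL_m$. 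It also gives $L_n< 2b^dL_m<2bF_kL_m$. Consequently $n$ is comparable to $m+k$, and more precisely $m+k-c_1\le n\le m+k+c_2\log b$ for small absolute constants. First, a direct computer search with, say, $n\le 200$ and all $m,k<n$, $2\le b\le 10$, should produce exactly the table in Theorem~\ref{thm1}. From then on we assume $n>200$. The degenerate cases $m\le1$ or $k\le 2$, where $L_m$ or $F_k$ is bounded, are handled within the same framework with one fewer variable.

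\textbf{First linear form.} We rewrite the equation as $\alpha^n-b^d\alpha^m=b^d\beta^m+F_k-\beta^n$. Dividing by $\alpha^n$, the right side is $O(b^d\alpha^{-n}+F_k\alpha^{-n})=O(\alpha^{-m})$, because $b^d\le bF_k\ll\alpha^{n-m}$. This gives
\[
\Lambda_1=\left|b^d\alpha^{m-n}-1\right|<\frac{c}{\alpha^{m}}.
\]
Then $\Lambda_1\neq0$, since $b^d\alpha^{m-n}\in\mathbb Q$ would force $m=n$, which is impossible. Matveev's theorem, applied in $\mathbb Q(\sqrt5)$ with $t=2$, $h(\alpha)=\tfrac12\log\alpha$, $h(b)=\log b$ and $B=n$, yields $m\log\alpha<c'\log b\,(1+\log n)$ with $c'\approx 10^{10}$.

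\textbf{Second linear form.} Next we group as $\alpha^n-b^d\alpha^m-\alpha^k/\sqrt5=(\text{terms in }\beta)$. This gives $\alpha^n-b^d(\alpha^m+\beta^m)-\alpha^k/\sqrt5=O(1)$. Dividing by $b^dL_m$, we want a small form involving $\alpha^k$ and $\sqrt5$. The cleaner choice is to write $\alpha^n-\alpha^k/\sqrt5=b^dL_m+O(1)$ and divide by $\alpha^n(1-\alpha^{k-n}/\sqrt5)$, which gives
\[
\Lambda_2=\left|b^dL_m\,\alpha^{-n}\Bigl(1-\frac{\alpha^{k-n}}{\sqrt5}\Bigr)^{-1}-1\right|<\frac{c}{\alpha^{n}}.
\]
This form has three algebraic numbers, $b$, $\alpha$ and $\gamma=L_m(1-\alpha^{k-n}/\sqrt5)^{-1}$. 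The height of $\gamma$ is $O((m+(n-k))\log\alpha)=O(m\log b)$, which is bounded by the first step. We check nonvanishing by applying a Galois conjugation: if $\Lambda_2=0$, conjugating gives a size contradiction. Matveev then yields $n<c''\log^2 b\,(1+\log n)^2$, hence $n<10^{30}$ or so, uniformly in $b\le10$.

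\textbf{Reduction (the main obstacle).} For each $b$ we set $\Gamma_1=d\log b-(n-m)\log\alpha$, so that $|\Gamma_1|<c\,\alpha^{-m}$. Since $\log b/\log\alpha$ is irrational, the Legendre or Dujella--Peth\H{o} lemma applied to its continued fraction with $M=10^{30}$ should give $m\le 150$ or so. The second form $\Gamma_2=d\log b-n\log\alpha+\log\gamma$ now has $\gamma$ depending on $m$ and on $n-k$. For that reason we also need a bound on $n-k$. We obtain it from a third form, $\alpha^n(1-\ldots)$, analogous to the first but isolating $F_k$; equivalently, $n-k$ is bounded in terms of $m$ and $\log b$ via the digit estimate. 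Given these bounds, we run Baker--Davenport over all pairs $(m,n-k)$ in the bounded range and all nine bases. The delicate case is when the inhomogeneous term gives $\varepsilon\le0$, which happens when $\log\gamma$ is a combination of $\log b$ and $\log\alpha$, for instance when $L_m$ is a power of $b$ times a unit. Those exceptional pairs must be treated separately, using the homogeneous (Legendre) version. The outcome is $n\le 200$, which falls within the search, and this completes the proof.
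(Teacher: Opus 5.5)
Your plan follows the paper's proof. It uses the same two forms $|1-b^d\alpha^{m-n}|$ and $|1-b^dL_m\alpha^{-n}(1-\alpha^{k-n}/\sqrt5)^{-1}|$, Matveev in $\mathbb{Q}(\sqrt5)$ with $t=2$ and $t=3$, Legendre for the homogeneous form, and Dujella--Peth\H{o} over the bounded parameter pairs for the other. The only difference is bookkeeping. You measure the first form against $\alpha^{-m}$, bound $m$, and get $n-k<m+7\log b$ from the size comparison. The paper measures it against $\alpha^{-(n-k)}$, bounds $n-k$, and gets $m<n-k+5$. By \eqref{7} these are interchangeable, so, as you suspected, no third form is needed.

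One step fails as written: the initial search over $m,k<n$ is too narrow.
\begin{itemize}
\item The case $k=n$ does occur. Since $L_n-F_n=2F_{n-1}$ and $b^d>F_n$, it forces $m=1$ and $b^d=2F_{n-1}$.
\item The solution $(0,1,0,2,1)$ even has $m>n$.
\end{itemize}
Your range would therefore miss $(0,1,0,2,1)$, $(2,1,2,2,1)$, $(4,1,4,2,2)$, $(7,1,7,2,4)$, $(4,1,4,4,1)$, $(7,1,7,4,2)$, $(5,1,5,6,1)$, and the highlighted decimal solution $(6,1,6,10,1)$, namely $18=10\cdot L_1+F_6$. Search over $m+k<n+5$ instead, as the paper does.

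Likewise, the search ceiling must be fixed after the reduction, not guessed beforehand. In the paper the final Dujella--Peth\H{o} step only yields $n<229$, and the search is run up to $n\le 1500$. Your prediction that the reduction lands at $n\le 200$ is plausible but unverified, so the ceiling $200$ is not justified in advance.
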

	\begin{theorem}\label{thm2}
		For $2\le b\le10$, the only solutions to the Diophantine equation \eqref{main2} are;
		\begin{table}[H]
			\centering
			\begin{tabularx}{\textwidth}{|c|>{\raggedright\arraybackslash}X|}
				\hline
				\textbf{Base $b$} & \textbf{Solutions $(n,m,k,b,d)$} \\
				\hline
				2 &
				$(2,1,1,2,1)$, $(2,2,1,2,1)$, $(4,1,2,2,2)$, $(4,2,2,2,2)$, $(4,4,1,2,1)$, $(5,3,2,2,2)$,\newline $(5,5,1,2,1)$, $(8,5,4,2,3)$ \\
				\hline
				3 &
				$(3,1,1,3,1)$, $(3,2,1,3,1)$, $(4,3,1,3,1)$, $(5,4,0,3,1)$, $(9,6,3,3,2)$ \\
				\hline
				4 &
				$(4,1,2,4,1)$, $(4,2,2,4,1)$, $(5,3,2,4,1)$ \\
				\hline
				5 &
				$(4,1,0,5,1)$, $(4,2,0,5,1)$, $(5,3,1,5,1)$, $(6,4,2,5,1)$, $(7,5,3,5,1)$ \\
				\hline
				6 &
				$(4,1,1,6,1)$, $(4,2,1,6,1)$, $(8,1,5,6,2)$, $(8,2,5,6,2)$ \\
				\hline
				7 &
				$(5,1,3,7,1)$, $(5,2,3,7,1)$, $(6,3,3,7,1)$ \\
				\hline
				8 &
				$(5,1,2,8,1)$, $(5,2,2,8,1)$, $(6,3,0,8,1)$, $(8,5,4,8,1)$, $(14,7,5,8,2)$ \\
				\hline
				9 &
				$(5,1,0,9,1)$, $(5,2,0,9,1)$, $(7,4,0,9,1)$, $(8,5,0,9,1)$, $(9,6,3,9,1)$ \\
				\hline
				10 &
				$(5,1,1,10,1)$, $(5,2,1,10,1)$ \\
				\hline
			\end{tabularx}
		\end{table}
		
		 In particular, the only Lucas number satisfying \eqref{main2} when $b=10$ (decimal base) is
		\[
		\begin{aligned}
			L_5 &= 10^1 \cdot F_1 + L_1 = 10^1 \cdot F_2 + L_1 = 11.
		\end{aligned}
		\] 
	\end{theorem}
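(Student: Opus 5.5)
The plan for Theorem~\ref{thm2} follows the standard Baker--Davenport scheme. First, for small indices I will enumerate all triples directly: for every $2\le b\le 10$ and $n,m,k\le 200$, compute $d=\lfloor \log_b L_k\rfloor+1$ and test $L_n=F_mb^d+L_k$. This produces the table. It remains to show there are no solutions with $n>200$. I will use the Binet formulas $L_n=\alpha^n+\beta^n$ and $F_n=(\alpha^n-\beta^n)/\sqrt5$, together with the bounds $\alpha^{n-2}\le F_n\le\alpha^{n-1}$ and $\alpha^{n-1}\le L_n\le 2\alpha^n$.

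The digit count satisfies $b^{d-1}\le L_k<b^d$, so $b^d\asymp L_k\asymp\alpha^k$, and $d\log b=k\log\alpha+O(\log b)$. From the equation, $\alpha^{n}\asymp \alpha^{m}\alpha^{k}$, which gives $n=m+k+O(1)$. Hence $n$ controls all the variables, with $m,k,d< n$.

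The main estimates come from two linear forms in logarithms, obtained by isolating the dominant terms.

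\textbf{Form 1.} Write the equation as $\alpha^n-\frac{\alpha^m}{\sqrt5}b^d=L_k-\beta^n-\frac{\beta^m}{\sqrt5}b^d$. Since $b^d\le bL_k\ll\alpha^k$, the right-hand side is $O(\alpha^{k})$. Dividing by $\alpha^n$ gives
\[
\bigl|\Lambda_1\bigr|=\bigl|(\sqrt5)^{-1}\alpha^{m-n}b^d-1\bigr|\ll\alpha^{k-n}\ll\alpha^{-m}.
\]
Here $\Lambda_1\neq0$, because $\alpha^{2(n-m)}\cdot 5\in\mathbb Q$ would force $n=m$, which is impossible for large $n$. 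Matveev's theorem with $t=3$, field $\mathbb Q(\sqrt5)$ of degree $2$, and heights $h(\sqrt5)=\tfrac12\log5$, $h(\alpha)=\tfrac12\log\alpha$, $h(b)=\log b$ gives $m\log\alpha<c_1\log n$ with $c_1\approx 10^{12}$.

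\textbf{Form 2.} Write $\alpha^n-\frac{\alpha^m}{\sqrt5}b^d-\alpha^k=$ small terms. Then
\[
|\Lambda_2|=\bigl|\alpha^{-n}b^d(\alpha^m/\sqrt5+\alpha^{k}b^{-d})^{-1}\ldots\bigr|.
\]
I will arrange the form as $\bigl|(\tfrac{\alpha^m}{\sqrt5}+\alpha^kb^{-d})\,b^d\alpha^{-n}-1\bigr|\ll\alpha^{-n}\,b^d$, or more precisely so that it is $\ll\alpha^{-(n-k)}$-type small. The coefficient $\gamma=\alpha^m/\sqrt5+\alpha^k b^{-d}$ has logarithmic height $\ll m+\log b$, because $\alpha^kb^{-d}$ is bounded in height by $k\log\alpha+d\log b\ll k$. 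This is a concern: a naive bound would give height $O(k)$, which is too big.

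\textbf{Main obstacle.} Form 2 is the step I expect to be hardest, and the choice of isolation matters. The better choice is to put $L_k$ on the left: $\alpha^n-\alpha^m b^d/\sqrt5=L_k+$ tiny. This gives only Form 1, which bounds $m$ but not $n$.

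To bound $n$, I will instead isolate $F_mb^d$ with $m$ already bounded. Write
\[
\alpha^n\bigl(1-\alpha^{-n}F_mb^d\bigr)=L_k+O(1),
\]
where $F_m$ is a known small integer, so its height is $O(\log n)$. Also, $L_k=\alpha^k+\beta^k$ and $k\approx n-m$. Rearranging as $\alpha^n-\alpha^k=F_mb^d+O(1)$, i.e. $\alpha^k(\alpha^{n-k}-1)=F_mb^d+O(1)$, gives
\[
|\Lambda_2|=\bigl|F_m\,(\alpha^{n-k}-1)^{-1}\alpha^{-k}b^d-1\bigr|\ll\alpha^{-k}.
\]
Since $n-k\le m+O(1)$ is bounded, this is a three-term form whose coefficient has height $\ll\log n$. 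Matveev then gives $k<c_2(\log n)^2$, and hence $n<10^{30}$ or so. Nonvanishing of $\Lambda_2$ follows by a conjugation argument: $\Lambda_2=0$ would force $\alpha^k(\alpha^{n-k}-1)$ to be rational, which is impossible.

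\textbf{Reduction.} Finally, I will reduce the bounds with Dujella--Peth\H{o}. For each $b$, I apply the lemma to $\Gamma_1=d\log b-(n-m)\log\alpha-\log\sqrt5$, using continued fractions of $\log b/\log\alpha$. This brings $m$ down to about $100$ or less. Then, for each fixed $b$, $m$ and $n-k$ (a finite range), I apply it to $\Gamma_2$, which brings $k$ and $n$ below $200$. Two special cases need care. When $b$ is a power of $\alpha$-related quantity, the form is homogeneous; this does not happen, since $\log b/\log\alpha$ is irrational. When $\Lambda_2$'s constant makes $\mu$ an integer combination, for instance $F_m(\alpha^{j}-1)^{-1}$ being a power of $\alpha$ times a power of $b$, I will treat it directly. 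This shows $n\le200$, which is covered by the initial search.
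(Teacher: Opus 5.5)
This is essentially the paper's proof. Your Form~1 and your final Form~2 are exactly its $\Gamma_3$ and $\Gamma_4$ (note $F_m(\alpha^{n-k}-1)^{-1}\alpha^{-k}=F_m\alpha^{-n}(1-\alpha^{k-n})^{-1}$, and the error is really $\ll\alpha^{-n}$), bounded by Matveev in the same order and reduced by Dujella--Peth\H{o} first for $n-k$ and then over the finitely many pairs $(m,n-k)$. Your caution about a degenerate $\mu$ is warranted: for $b=5$ and $m=n-k=10$ one gets $F_{10}(\alpha^{10}-1)^{-1}=5\alpha^{-5}$, where Legendre's lemma must replace Dujella--Peth\H{o}, and the paper does not address this case.

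What you omit is the case $m=0$, which the paper sets aside explicitly. There \eqref{main2} collapses to $L_n=L_k$, giving the trivial family $n=k$, which your search would find but the table excludes, and Form~2 is meaningless because $F_0=0$. Also let the search bound follow what the reduction actually delivers; the paper gets $n<214$, not $200$.
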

	
	\medskip
	
	 To establish these results, we apply two powerful tools from Diophantine approximation. The first is Matveev's theorem from \cite{matl} on lower bounds for non-zero linear forms in logarithms of algebraic numbers, which provide explicit upper bounds for the variables. The second is a reduction method based on a theory of continued fractions, derived from the Baker-Davenport Lemma and refined by Dujella and Peth\H{o}, \cite{duj}. Together, these techniques allow us to prove that the equations \eqref{main1} and \eqref{main2} have only the solutions stated in Theorems \ref{thm1} and \ref{thm2} respectively. All necessary computations were performed with the aid of SageMath, with precision up to $1000$ digits. 
	 
	\section{Methods}
	\subsection{Some results on Fibonacci and Lucas numbers}
	Let $F_n$ and $L_n$ be the sequence of Fibonacci and Lucas numbers respectively. They both share the same characteristic polynomial $ x^2-x-1=0,$ whose roots are 
	\[\alpha= \dfrac{1+\sqrt{5}}{2} \quad \text{and } \qquad \beta=\dfrac{1-\sqrt{5}}{2}.\] 
	Furthermore, the roots $\alpha $ and $\beta$ are related by $\beta=-\alpha^{-1}.$ The Binet formulae for Fibonacci and Lucas numbers are given by 
	\[F_n=\dfrac{\alpha^n-\beta^n}{\sqrt{5}} \qquad \text{and } \qquad L_n=\alpha^n+\beta^n,\] 
	respectively for $n\geq0.$ Moreover, it was shown in \cite{alan} relates the $n^{\text{th}}$ Fibonacci number is bounded by powers of $\alpha$ via the inequalities	
	\begin{align*}
		\alpha^{n-2}\leq F_n\leq \alpha^{n-1},
	\end{align*}
	for all $n\ge 1$.
	 Similarly, Bravo and Luca showed in \cite{BravoLuca2014} that 
	 \begin{align*}
		\alpha^{n-1}\leq L_n\leq 2\alpha^{n},
	\end{align*} 
	holds for all $n\ge 0$.
	
		\subsection{Bounds on variables from Equation \eqref{main1}}\label{prel1}
	
	Here, we return to \eqref{main1} and establish relationships between the variables contained in there. The number of digits of $F_k$ in base $b$ can be written as 
	$$d=\left \lfloor\log_b {F_k} \right \rfloor +1,$$
	 where $\left \lfloor\Phi\right \rfloor$ is the greatest integer less than or equal to $\Phi$, known as the floor function. Thus,
	  \begin{equation*}
		d=	\left\lfloor\log_b {F_k} \right\rfloor +1 \leq \log_b {F_k} + 1 \leq 1 + \log_b {\alpha^{k-1}} = 1 + (k-1) \log_b \alpha <k, 
		\end{equation*}   
		where we have used the fact that $\log_b \alpha <1$, for $2 \le b \le 10$.
	Also, 
	\begin{equation*}
		d=\left\lfloor\log_b {F_k} \right\rfloor +1 > \log_b {F_k} > \log_b {\alpha^{k-2}} = (k-2) \log_b \alpha >\frac{k-2}{5}, 
		\end{equation*} 
		for which we have used the fact that $\log_b \alpha>1/5$, for $2\le b\le 10.$ Thus we get 
		\begin{equation*}
		\frac{k-2}{5} <d < k.
	\end{equation*}
	On the other hand, \begin{equation}\label{6}
		F_k \leq b^{\left \lfloor\log_b {F_k} \right \rfloor} < b^d =b^{1+\left \lfloor\log_b {F_k} \right \rfloor}=bF_k.
	\end{equation}
	From inequalities \eqref{6} and \eqref{main1}, we have that 
	\begin{align}\label{pp}
		\alpha^{n-1}&<L_n=L_m\cdot b^d +F_k \nonumber\\
		&< bL_mF_k+bF_k =bF_k (L_m+1)\nonumber\\
		&\leq b\alpha^{k-1}(2\alpha^m+1)\leq 3b\alpha^{k+m-1},
	\end{align}   
	and 
	\begin{align}\label{pq}
		2\alpha^n >L_n= L_m\cdot b^d +F_k >F_kL_m +F_k \geq F_kL_m>\alpha^{k+m-3}.
	\end{align} 
	The inequalities \eqref{pp} and \eqref{pq} yield \begin{align*}
		k+m-5 &<n< k+m + \frac{\log {3b}}{\log \alpha} \\&< k+m+3+\frac{\log b}{\log \alpha}\\&< k+m+7\log {b},\end{align*}
	or equivalently,
	 \begin{equation}\label{7}
		k+m-5<n<k+m+7\log {b}.
	\end{equation}	
	Furthermore, we can deduce from the series of inequalities 
	\begin{align*}
		2^d<b^d \leq b^d +\frac{F_k}{L_m} = \frac{L_n}{L_m} \leq \frac{2\alpha^n}{\alpha^{m-1}}=2\alpha^{n-m+1} < 2^{n-m+2},
	\end{align*}   
	 that 
	 \begin{equation}\label{b}
		d<n-m+2.
	\end{equation}
	We use these inequalities more often in the proof of Theorem \ref{thm1}.
	
\subsection{Bounds on variables from Equation \eqref{main2}}\label{prel2}	
Next, suppose Equation \eqref{main2} holds. We also establish some inequalities that will be useful while proving Theorem \ref{thm2}. Here,
 \begin{align*}
	d&=	\left\lfloor\log_b {L_k} \right\rfloor +1 \leq \log_b {L_k} + 1 \leq 1 + \log_b {2\alpha^{k}} \\&= 1 +\log_b 2+ k \log_b \alpha = 1+\frac{\log 2}{\log b}+\frac{k\log \alpha}{\log b}\\&<k+2, 
	\end{align*}   for $2\leq b\leq 10,$ and 
	 \begin{align*}
	d&=\left\lfloor\log_b {L_k} \right\rfloor +1 > \log_b {L_k} \\&\geq \log_b {\alpha^{k-1}} = (k-1) \log_b \alpha \\&>\frac{k-1}{2}. 
	\end{align*}   
	Therefore
	  \begin{equation*}
	\frac{k-1}{2} <d < k+2.
\end{equation*}
Further, we have \begin{equation*}
	L_k \leq b^{\left \lfloor\log_b {L_k} \right \rfloor} < b^d <b^{1+\left \lfloor\log_b {L_k} \right \rfloor}\leq b^{1+\log_b {L_k} }= b\cdot b^{\log_b {L_k} }=bL_k,
\end{equation*}   which yields 
\begin{equation}\label{s}
	L_k<b^d\leq bL_k.
\end{equation}
By \eqref{s}, we see that 
\begin{align*}	
	\alpha^{n-1}&<L_n=F_m\cdot b^d +L_k < bL_kF_m+L_k\\& \leq (b+1)L_kF_m \leq 2(b+1)\alpha^{m+k-1}\\&=\alpha^{\log_\alpha {2(b+1)}}\cdot \alpha^{m+k-1},
\end{align*}   
and thus 
\begin{align}\label{t}
	\alpha^{n-1}<\alpha^{m+k-1+\log_\alpha {2(b+1)}}.
\end{align}  
Also, we have 
\begin{align*}
	2\alpha^n &>L_n= F_m\cdot b^d +L_k \\
	&>L_kF_m +L_k >L_kF_m\\
	&>\alpha^{m+k-3},
\end{align*}   
which implies that
$
	2\alpha^n=\alpha^{n+\log_\alpha 2} >\alpha^{m+k-3}.
$
Therefore,
\[
\alpha^{m+k-3}<\alpha^{n+\log_\alpha 2}<\alpha^{n+2},
\]
which implies
\begin{equation}\label{adu}
	\alpha^{m+k-6}<\alpha^{n-1}.
\end{equation}
Combining \eqref{adu} with \eqref{t}, we get
\[
\alpha^{m+k-6}<\alpha^{n-1}<\alpha^{m+k-1+\log_\alpha2(b+1)},
\]
and therefore
\[
m+k-5<n<m+k+\log_\alpha2(b+1).
\]
Furthermore,
\begin{align*}
	\log_\alpha2(b+1)
	&= \frac{\log2(b+1)}{\log\alpha}
	= \frac{\log\left(2b\left(1+\frac{1}{b}\right)\right)}{\log\alpha}
	< \frac{\log(4b)}{\log\alpha} \\
	&= \frac{\log 4 + \log b}{\log\alpha}
	= \frac{(\log_b 4+1)\log b}{\log\alpha} 
	< 7\log b .
\end{align*}
Hence,
\begin{equation}\label{T2}
	m+k-5<n<m+k+7\log b .
\end{equation}
If $m=0$, then $F_0=0$ and thus equation \eqref{main2} becomes $L_n=L_k$ and thus all Lucas numbers such that $n=k$ satisfy the Diophantine equation \eqref{main2}. Therefore we consider the case $m>0$. Since $m>0$, then we can deduce from \eqref{s} that \begin{align*}
	L_n=b^dF_m +L_k\geq b^d+L_k>2L_k
\end{align*}  
and therefore 
\begin{align*}
	n-k\geq 1.
\end{align*} 
Finally here, from 
\begin{equation}\label{eq:d}
	b^d< b^d + \dfrac{L_k}{F_m}=\dfrac{L_n}{F_m}\leq \dfrac{2\alpha^n}{\alpha^{m-2}}=2\alpha^{n-m+2}
	\end{equation} 
	we have that $d<n-m+3.$	
We shall also use these inequalities later in the proof of Theorem \ref{thm2}.

	\subsection{Linear Forms in Logarithms}
	We use Baker-type lower bounds for nonzero linear forms in logarithms of algebraic numbers. There are many such bounds mentioned in the literature like that of Baker and W\"ustholz from \cite{BW} or Matveev from \cite{matl}. Before we can formulate such inequalities, we need the notion of height of an algebraic number recalled below.
	
	\begin{definition}[Logarithmic height]
		Let $\gamma$ be an algebraic number of degree $d$ with minimal primitive polynomial over the integers, given by
		\[
		a_0 x^d + a_1 x^{d-1} + \cdots + a_d = a_0 \prod_{i=1}^d (x - \gamma^{(i)}),
		\]
		where the leading coefficient $a_0$ is positive. Then the logarithmic height of $\gamma$ is given by 
		\begin{equation*}
			h(\gamma) := \frac{1}{d} \left( \log a_0 + \sum_{i=1}^d \log \max \{ |\gamma^{(i)}|, 1 \} \right). \label{eq:log_height}
		\end{equation*}
	\end{definition} 
	  In particular, if $\gamma$ is a rational number represented as $\gamma := p/q$ with coprime integers $p$ and $q \ge 1$, then 
	\[
	h(\gamma) = \log \max \{ |p|, q \}.
	\]
	The following properties of the logarithmic height function $h(\cdot)$ will be used in the rest of the paper without further reference:
	\begin{align*}
		h(\gamma_1 \pm \gamma_2) &\le h(\gamma_1) + h(\gamma_2) + \log 2,\\
		h(\gamma_1 \gamma_2^{\pm 1}) &\le h(\gamma_1) + h(\gamma_2),\\
		h(\gamma^s) &= |s| h(\gamma) \quad \text{valid for } s \in \mathbb{Z}.	
	\end{align*}
	
	 A linear form in logarithms is an expression
	\begin{align*}
		\Lambda := b_1 \log \gamma_1 + \cdots + b_t \log \gamma_t ,
	\end{align*}
	where for us $\gamma_1, \ldots, \gamma_t$ are positive real algebraic numbers and $b_1, \ldots, b_t$ are non--zero integers. We assume $\Lambda \neq 0$. We need lower bounds for $|\Lambda|$. We write $K := \mathbb{Q}(\gamma_1, \ldots, \gamma_t)$ and $D$ for the degree of $K$. We start with a modified version of Matveev's result due to Bueaud, Mignotte and Siksek, \cite{BugeaudMignotteSiksek2006}.
	
	\begin{theorem}[Matveev, \cite{BugeaudMignotteSiksek2006}]\label{log}
		Let $\Gamma := \gamma_1^{b_1} \cdots \gamma_t^{b_t} - 1 = e^\Gamma - 1$ and assume $\Gamma \neq 0$. Then 
		\begin{equation*}
			\log |\Gamma| > -1.4 \cdot 30^{t+3} \cdot t^{4.5} \cdot D^2 (1 + \log D) (1 + \log B) A_1 \cdots A_t, \label{eq:matveev}
		\end{equation*}
		where $B \ge \max \{|b_1|, \ldots, |b_t|\}$ and $A_i \ge \max \{ D h(\gamma_i), |\log \gamma_i|, 0.16 \}$ for all $i = 1, \ldots, t$.
	\end{theorem}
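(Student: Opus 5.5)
Theorem \ref{log} is not something to re-prove from scratch here. It is Matveev's lower bound for linear forms in logarithms, in the normalisation of Bugeaud, Mignotte and Siksek. The plan is to take Matveev's original theorem from \cite{matl} as the black box and carry out only the short reduction that puts it in the stated form.

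The form I would start from is the real-case statement in \cite{matl}. Let $\gamma_1,\dots,\gamma_t$ be positive real algebraic numbers in a real field $K$ of degree $D$, let $b_1,\dots,b_t$ be integers, and put $\Lambda=b_1\log\gamma_1+\cdots+b_t\log\gamma_t\neq0$. Then
\[
\log|\Lambda|>-1.4\cdot 30^{t+3}\, t^{4.5}\, D^2(1+\log D)(1+\log B)A_1\cdots A_t ,
\]
with $B\ge\max|b_i|$ and $A_i\ge\max\{Dh(\gamma_i),|\log\gamma_i|,0.16\}$. The constant $1.4\cdot 30^{t+3}t^{4.5}$ is the real-field specialisation of Matveev's general constant; I would check it against \cite{matl} and \cite{BugeaudMignotteSiksek2006} rather than re-derive it.

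The first step is to pass from $\Lambda$ to $\Gamma$. Write $\Gamma=\gamma_1^{b_1}\cdots\gamma_t^{b_t}-1=e^{\Lambda}-1$. Then $\Gamma\neq0$ exactly when $\Lambda\neq0$, because the $\gamma_i$ are positive reals and the real logarithm is injective.

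The second step splits into two cases according to the size of $|\Lambda|$.
\begin{itemize}
\item If $|\Lambda|\ge 1/2$, then $|\Gamma|=|e^{\Lambda}-1|\ge 1-e^{-1/2}>1/3$. So $\log|\Gamma|>-\log 3$, and the right-hand side of the claimed bound is already far below this.
\item If $|\Lambda|<1/2$, then the mean value theorem gives $|e^{\Lambda}-1|\ge e^{-1/2}|\Lambda|>|\Lambda|/2$. Hence $\log|\Gamma|>\log|\Lambda|-\log 2$.
\end{itemize}

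The one point that needs care is the loss of $\log 2$ in the second case. I would absorb it into the main term. The main term is at least $1.4\cdot30^4\cdot0.16\approx 1.8\cdot10^5$ even for $t=1$, so Matveev's constant has ample room. The cleanest route is to note that \cite{BugeaudMignotteSiksek2006} state the bound directly for $\log|\Gamma|$ precisely because this slack is available, and to cite that formulation.

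In the applications to \eqref{main1} and \eqref{main2}, all the $\gamma_i$ will be taken from $\{\alpha, b,\sqrt5,\ldots\}$ and related quantities inside $\mathbb{Q}(\sqrt5)$, a real field with $D=2$. So the real-field hypothesis of Theorem \ref{log} is exactly what we need.
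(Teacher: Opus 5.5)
Your proposal agrees with the paper, which gives no proof of this statement and simply quotes Matveev's bound as formulated by Bugeaud--Mignotte--Siksek (their Theorem 9.4). That formulation is already stated for $\Gamma=\gamma_1^{b_1}\cdots\gamma_t^{b_t}-1$, with the $\gamma_i$ positive elements of a real field. One correction to your optional reduction from the additive form: the size of the main term cannot absorb the additive $\log 2$, since from $\log|\Lambda|>-C$ you only get $\log|\Gamma|>-C-\log 2$. The room comes instead from Matveev's real-field constant being $\tfrac{e}{2}\cdot 30^{t+3}t^{4.5}\approx 1.36\cdot 30^{t+3}t^{4.5}$, so rounding $e/2$ up to $1.4$ frees at least $0.04\cdot 30^{4}\cdot 0.16>\log 2$.
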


	\subsection{Reduction Methods}
	
	Typically, the estimates from Theorem \ref{log} are excessively large to be practical in computations. To refine these estimates, we employ a modified approach based on the Baker-Davenport reduction method. Our adaptations follow the method introduced by Dujella and Peth\H{o} (\cite{duj}, Lemma 5). When considering a real number $r$, we use $\| r \|$ to represent the smallest distance between $r$ and any nearest integer which is formally written as $\min \{ |r - v| : v \in \mathbb{Z} \}$.
	\begin{lemma}[Dujella \& Peth\H{o}
		, \cite{duj}]\label{duj}
		Let $\tau $ be irrational and $A, B, \mu$ be real numbers with $A > 0$ and $B > 1$. Let $M > 1$ be a positive integer and suppose that $p/q$ is a convergent of the continued fraction expansion of $\tau$ with $q > 6M$. Let 
		\[
		\varepsilon := ||\mu q|| - M \|| \tau q \||.
		\]
		If $\varepsilon > 0$, then there is no solution of the inequality 
		\begin{equation*}
			0 <| u \tau - v + \mu |< A B^{-w},
		\end{equation*}
		in positive integers $u, v, w$ with 
		\begin{equation*}
			\frac{\log (Aq/\varepsilon)}{\log{B}} \ge w \quad \text{and} \quad m < M.
		\end{equation*}
	\end{lemma}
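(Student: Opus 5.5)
The plan is a direct contradiction argument, the standard Baker--Davenport manipulation. I read the statement in its original Dujella--Peth\H{o} form: there is no solution with $u<M$ (the ``$m<M$'' is a typo for $u<M$) and with
\[
w \ge \frac{\log (Aq/\varepsilon)}{\log B}.
\]
This is the direction that makes the lemma useful for reduction, since it bounds $w$ from above.

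\emph{Step 1: convert the lower bound on $w$ into a bound on $AB^{-w}$.} Suppose $u,v,w$ is a solution with $u<M$ and $w\ge \log(Aq/\varepsilon)/\log B$. Because $B>1$, this gives $B^{w}\ge Aq/\varepsilon$, that is, $AB^{-w}\le \varepsilon/q$. Multiplying the assumed inequality by $q>0$ then yields
\[
|uq\tau - vq + \mu q| < qAB^{-w} \le \varepsilon .
\]

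\emph{Step 2: use that $p/q$ is a convergent.} Classical continued fraction theory gives $|q\tau - p| < 1/q$. Since $q>6M\ge 6$, this is less than $1/2$, so $p$ is the nearest integer to $q\tau$. Hence we may write $q\tau = p+\delta$ with $|\delta| = \|q\tau\|$. Substituting,
\[
uq\tau - vq + \mu q = (up - vq) + \mu q + u\delta ,
\]
where $up-vq$ is an integer. Therefore
\[
\|\mu q\| \le |\mu q + up - vq| \le |uq\tau - vq + \mu q| + u|\delta| .
\]
Using Step 1 and $u<M$, the right-hand side is less than $\varepsilon + M\|q\tau\| = \|\mu q\|$. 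This is a contradiction, so no such solution exists.

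\emph{Main obstacle.} There is no real technical difficulty here; the only care needed is bookkeeping.
\begin{itemize}
\item One must confirm that $p$ is indeed the integer nearest to $q\tau$, so that $|\delta|=\|q\tau\|$. The hypothesis $q>6M$ more than suffices for this.
\item One must keep the inequalities strict where needed. The original hypothesis $0<|u\tau-v+\mu|<AB^{-w}$ is strict, so the final inequality is strict even when $w$ equals the bound exactly.
\item Positivity of $u$ is used only through $u|\delta|\le M\|q\tau\|$.
\end{itemize}
Irrationality of $\tau$ is needed only so that the continued fraction expansion, and hence the convergent $p/q$ with arbitrarily large $q$, exists.
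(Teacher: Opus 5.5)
Your proof is correct, and you read the garbled hypotheses the right way: the conditions should be $u<M$ and $w\ge \log(Aq/\varepsilon)/\log B$. This is the original Dujella--Peth\H{o} formulation, and it is also how the paper uses the lemma in its reductions. The paper gives no proof of its own and simply cites \cite{duj}, whose proof is the same argument you give: multiply by $q$, replace $q\tau$ by its nearest integer $p$ at a cost of at most $u\|q\tau\|\le M\|q\tau\|$, and obtain $\|\mu q\|<\varepsilon+M\|q\tau\|$, a contradiction.
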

	  Lemma \eqref{duj} cannot be applied where $\mu=0$ or when $\mu$ is a linear combination of $1$ and $\gamma$ with integer coefficients, as in such cases $\varepsilon$ becomes negative for sufficiently large $M$. When this happens, a classical result due to Legendre (see Theorem 8.2.4 in \cite{qu}) is applied instead.
	\begin{lemma}[Legendre, \cite{qu}]\label{legendre}
		Let $\tau$ be an irrational number, $M$ be a positive integer and $\frac{p_k}{q_k}(k=0,1,2,\cdots)$ be all the convergents of the continued fraction $[a_0,a_1,\cdots]$ of $\tau.$ let $N$ be such that $q_N>M.$ Then putting $a_M:=\max\{a_i:~i=0,1,\cdots ,N\},$ the inequality \begin{align*}
			|u\tau-v|>\frac{1}{(a_M+2)u},
		\end{align*} holds for all pairs $(v,u) $of integers with $0<u<M.$   
	\end{lemma}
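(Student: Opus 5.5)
The plan is the classical argument from the theory of best approximations, using only standard facts about the convergents $p_k/q_k$ of $\tau=[a_0,a_1,\dots]$: the recurrence $q_{k+1}=a_{k+1}q_k+q_{k-1}$ with $q_{-1}=0$, $q_0=1$; the identity
\[
\tau-\frac{p_k}{q_k}=\frac{(-1)^k}{q_k(\tau_{k+1}q_k+q_{k-1})},
\]
where $\tau_{k+1}=[a_{k+1},a_{k+2},\dots]$ is the complete quotient; and the best approximation property of convergents.

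\textbf{Step 1 (place $u$ between consecutive denominators).} Let $(v,u)$ be integers with $0<u<M$. Since $q_0=1\le u<M<q_N$ and the $q_k$ are increasing, there is a unique index $k$ with $0\le k\le N-1$ and
\[
q_k\le u<q_{k+1}.
\]
It is essential that $k+1\le N$, because then $a_{k+1}\le a_M=\max\{a_i: 0\le i\le N\}$.

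\textbf{Step 2 (reduce to the convergent).} By Lagrange's best approximation theorem, for every integer $v$ and every $0<u<q_{k+1}$ we have
\[
|u\tau-v|\ge |q_k\tau-p_k|.
\]
I would quote this, or re-derive it by writing $(u,v)$ as an integer combination $x(q_k,p_k)+y(q_{k+1},p_{k+1})$. Unimodularity of the convergent matrix makes $x,y$ integers; the condition $0<u<q_{k+1}$ forces $x\neq 0$ and, when $y\ne0$, $x$ and $y$ of opposite signs. Since $q_k\tau-p_k$ and $q_{k+1}\tau-p_{k+1}$ also have opposite signs, the two terms add in absolute value.

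\textbf{Step 3 (bound the convergent error).} From the identity above,
\[
|q_k\tau-p_k|=\frac{1}{\tau_{k+1}q_k+q_{k-1}}.
\]
Because $\tau$ is irrational, $\tau_{k+1}<a_{k+1}+1$ strictly. Hence
\[
|q_k\tau-p_k|>\frac{1}{(a_{k+1}+1)q_k+q_{k-1}}\ge\frac{1}{(a_{k+1}+2)q_k}\ge\frac{1}{(a_M+2)u},
\]
using $q_{k-1}\le q_k$, then $a_{k+1}\le a_M$ and $q_k\le u$. Combining with Step 2 gives the claim.

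\textbf{Main obstacle.} The only delicate point is the best approximation inequality in Step 2. The rest is bookkeeping, plus checking that the index $k+1$ stays within $\{0,\dots,N\}$ so that $a_M$ really dominates $a_{k+1}$. If $v$ is not the nearest integer to $u\tau$, the inequality only gets easier, so no separate case is needed. Strictness of the final inequality comes from the irrationality of $\tau$, through $\tau_{k+1}<a_{k+1}+1$.
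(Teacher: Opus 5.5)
The paper gives no proof of this lemma. It quotes the result from \cite{qu} (Theorem 8.2.4) and uses it as a black box in the reduction of $\Lambda_1$. Your argument is correct and complete, and it is the standard textbook proof.

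\begin{itemize}
\item You place $u$ between consecutive denominators, $q_k\le u<q_{k+1}$ with $k+1\le N$.
\item You invoke the best-approximation property $|u\tau-v|\ge|q_k\tau-p_k|$. Your unimodular decomposition $(u,v)=x(q_k,p_k)+y(q_{k+1},p_{k+1})$, with $x\neq0$ and $xy\le 0$, is a full derivation of it. The sign alternation of $q_j\tau-p_j$ then makes the two terms add in absolute value.
\item You finish with the exact error formula. The strict inequality $\tau_{k+1}<a_{k+1}+1$ (from irrationality) and $q_{k-1}\le q_k$ give $|q_k\tau-p_k|>1/\bigl((a_{k+1}+2)q_k\bigr)\ge 1/\bigl((a_M+2)u\bigr)$.
\end{itemize}

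So you supply exactly what the paper takes on citation.

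Two minor remarks.

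First, the $q_k$ are only non-decreasing, since $q_0=q_1=1$ when $a_1=1$. Rather than speaking of a unique index, take $k$ to be the largest index with $q_k\le u$. Because $q_N>M>u$, this gives $k\le N-1$ and $q_{k+1}>u$, which is all you use.

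Second, your argument actually proves the sharper bound $|u\tau-v|>1/\bigl((a_{k+1}+2)u\bigr)$. Hence the maximum defining $a_M$ need only run over $1\le i\le N$. Including $a_0$, as the statement does, is harmless but unnecessary.
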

	  Next, we prove the following fact from calculus which will be used later throughout the paper.
	\begin{lemma}\label{Analysis}
		If $x \in \mathbb{R} $ satisfies $|x|<\frac{1}{2}$, then \[\frac{1}{2}|x|<|\log{(1+x)}|<\frac{3}{2}|x|.\]
		\begin{proof}
		If $x \in \mathbb{R} $ satisfies $|x|<\frac{1}{2}$, then we obtain the upper bound on $|\log (1+x)|$ as follows: \begin{align*}|\log{(1+x)}|&=\left|x-\frac{x^2}{2}+\frac{x^3}{3}-\frac{x^4}{4}+\cdots \right|,\\&<|x|+\frac{1}{2}\left(|x|^2+|x|^3+|x|^4+\cdots \right)=|x|\left(1+\frac{|x|}{2(1-|x|)}\right)<\frac{3}{2}|x|.\end{align*}
		Similarly, if $x \in \mathbb{R} $ satisfies $|x|<\frac{1}{2}$, then we obtain the lower bound on $|\log {(1+x)}|$ as follows: \begin{align*}
			|x-\log{(1+x)}|&=\left|\frac{x^2}{2}-\frac{x^3}{3}+\frac{x^4}{4}-\cdots\right|,\\&<\frac{|x|}{2}\left(|x|+|x|^2+|x|^3+\cdots\right)=|x|\left(\frac{|x|}{2(1-|x|)}\right)<\frac{1}{2}|x|.
		\end{align*}
		The desired inequality imediately follows by noting that $|x|\leq |x-\log{(1+x)}|+|\log{(1+x)}|.$ 
		That is, \[|\log{(1+x)}\geq |x|-|x-\log{(1+x)}|>|x|-\frac{1}{2}|x|=\frac{1}{2}|x|.\]
		\end{proof}
	\end{lemma}
	  Finally, we present an analytical argument which is Lemma 7 in \cite{guz}.
	\begin{lemma}[Lemma 7 in \cite{guz}]\label{reduction}
		If $s \ge 1$, $T > (4s^2)^s$ and $T > Z/(\log{Z})^s$, then 
		\begin{equation*}
			Z < 2^sT( \log T)^s.
		\end{equation*}
	\end{lemma}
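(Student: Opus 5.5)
The plan is to argue by contradiction, using the monotonicity of $f(x)=x/(\log x)^s$ on $(e^s,\infty)$. We assume $Z\ge Z_0:=2^sT(\log T)^s$ and show that then $Z/(\log Z)^s\ge T$, which contradicts the hypothesis $T>Z/(\log Z)^s$.

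First, the derivative is $f'(x)=(\log x)^{-s-1}(\log x - s)$, which is positive for $x>e^s$. So $f$ is increasing there. Since $T>(4s^2)^s\ge 4^s$, we have $\log T>s\log 4>1$. Hence $Z_0>2^s\cdot 4^s>e^s$. Therefore, if $Z\ge Z_0$, monotonicity gives
\[
\frac{Z}{(\log Z)^s}\ \ge\ \frac{Z_0}{(\log Z_0)^s}=\frac{2^sT(\log T)^s}{\left(s\log 2+\log T+s\log\log T\right)^s}.
\]
The right-hand side is at least $T$ precisely when $2\log T\ge s\log 2+\log T+s\log\log T$, that is, when
\[
\log T\ \ge\ s\log(2\log T), \qquad\text{equivalently}\qquad T^{1/s}\ge 2\log T .
\]

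The main (though modest) step is verifying this last inequality from $T>(4s^2)^s$. Put $y=T^{1/s}>4s^2$, so that $2\log T=2s\log y$, and we need $g(y):=y-2s\log y\ge 0$. Since $g'(y)=1-2s/y>0$ for $y>2s$, and $4s^2\ge 2s$, it suffices to check the endpoint:
\[
g(4s^2)=2s\left(2s-\log 4-2\log s\right).
\]
This is positive because $2s-2\log s\ge 2>\log 4$ for $s\ge1$. Hence $g(y)>0$ for all $y>4s^2$, so $T^{1/s}>2\log T$. Consequently $Z/(\log Z)^s\ge T$, contradicting $T>Z/(\log Z)^s$, and therefore $Z<2^sT(\log T)^s$.

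I expect the only care needed is to ensure that $Z$ and $Z_0$ lie in the region $x>e^s$ where $f$ is monotone. This is automatic under the contradiction assumption, since $Z\ge Z_0>e^s$.
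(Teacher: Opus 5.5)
Your argument is correct: assuming $Z\ge 2^sT(\log T)^s$, monotonicity of $x/(\log x)^s$ on $(e^s,\infty)$ reduces the claim to $T^{1/s}\ge 2\log T$, and you verify this cleanly from $T>(4s^2)^s$ by taking $y=T^{1/s}$ and checking the endpoint $y=4s^2$. The paper gives no proof of its own (it imports the result as Lemma 7 of \cite{guz}), and your contradiction-plus-monotonicity route is essentially the standard proof of that lemma.
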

	  SageMath was used to perform all the computations in this work.
	
	\section{Proof of the Main Results}
	In Subsections \ref{prel1} and \ref{prel2}, we established some relationships between the variables in \eqref{main1} and \eqref{main2} respectively. We use these relationships in the proof of the main results.

	\subsection{Proof of Theorem \ref{thm1}}
	When $n\leq1500$, we have that $m+k <n+5$ via \eqref{7} and $d<n-m+2$ via \eqref{b}. A quick computer search using  SageMath yields the results stated in Theorem \ref{thm1} as the only solutions. So from now on, we assume in this subsection that $n>1500.$
	
	We start by proving the following lemma. 
	\begin{lemma}\label{lem3.1}
		All solutions to the Diophantine equation \eqref{main1} satisfy 
		\[n-k<2.3\cdot 10^{10}\log b \log {(n-m+2)}.\]
	\end{lemma}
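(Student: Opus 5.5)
We rewrite \eqref{main1} with the Binet formulae so that the two largest terms balance. Since $L_n=\alpha^n+\beta^n$, $L_m=\alpha^m+\beta^m$ and $F_k=(\alpha^k-\beta^k)/\sqrt5$, we isolate the dominant parts:
\[
\alpha^n - b^d\alpha^m = b^d\beta^m - \beta^n + \frac{\alpha^k-\beta^k}{\sqrt5}.
\]
We divide by $\alpha^n$ and take absolute values. The term $F_k$ dominates the right-hand side, since $b^d|\beta|^m\le b^d$ and $b^d<bF_k$ by \eqref{6}. Using $n>1500$ and $F_k\le\alpha^{k-1}$, this gives
\[
\Gamma_1:=\left|b^d\alpha^{m-n}-1\right| < \frac{c\,b\,\alpha^{k}}{\alpha^{n}} = \frac{c\,b}{\alpha^{n-k}}
\]
for a small absolute constant $c$ (something like $c\le 3$ should suffice).

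Next we check $\Gamma_1\neq0$. If $b^d=\alpha^{n-m}$, then $\alpha^{n-m}\in\mathbb Q$. This forces $n=m$, hence $d=0$, which is impossible.

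We then apply Theorem \ref{log} with $t=2$, $\gamma_1=b$, $\gamma_2=\alpha$, exponents $b_1=d$, $b_2=m-n$, and $K=\mathbb{Q}(\sqrt5)$, so $D=2$. The heights are $h(b)=\log b$ and $h(\alpha)=\frac12\log\alpha$. So we may take $A_1=2\log b$ and $A_2=\log\alpha<0.49$ (which exceeds $0.16$). For $B$, we use \eqref{b}: $d<n-m+2$, so $B=n-m+2$ bounds both $|b_1|$ and $|b_2|$. Matveev's bound gives
\[
\log\Gamma_1 > -1.4\cdot 30^5\cdot 2^{4.5}\cdot 4(1+\log 2)(1+\log(n-m+2))\cdot 2\log b\cdot \log\alpha .
\]
Comparing with the upper bound $\log\Gamma_1<\log(cb)-(n-k)\log\alpha$ yields $(n-k)\log\alpha$ bounded by the product above plus $\log(cb)$. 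Dividing by $\log\alpha$ cancels the factor $A_2=\log\alpha$ nicely, leaving $n-k<C\log b\,(1+\log(n-m+2))+O(\log b)$ with $C\approx 1.4\cdot 30^5\cdot 2^{4.5}\cdot 8(1+\log 2)\approx 4.3\cdot10^{9}$.

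Finally, we absorb the lower-order terms. Since $n-m+2\ge 2$ (indeed $n>m$ here), we have $1+\log(n-m+2)<2.5\log(n-m+2)$, and $\log(cb)/\log\alpha$ is negligible. This gives the stated constant $2.3\cdot10^{10}$ with room to spare.

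The main obstacle is the upper bound for $\Gamma_1$. We must make sure that the $b^d\beta^m$ term does not spoil the $\alpha^{k-n}$ decay, and we must handle small $m$ (e.g.\ $m=0$, where $L_0=2$ makes $\alpha^m$ not the exact leading term). I would handle this by keeping $b^d\beta^m$ inside the error and bounding $b^d<bF_k\le b\alpha^{k-1}$ directly, which works uniformly in $m$. The remaining work is bookkeeping of constants.
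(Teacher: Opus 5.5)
Your strategy is the paper's: the same $\Gamma_1=|1-b^d\alpha^{m-n}|<cb\,\alpha^{-(n-k)}$, the same Matveev data ($t=2$, $D=2$, $A_1=2\log b$, $A_2=\log\alpha$, $B=n-m+2$), and your non-vanishing argument via irrationality of $\alpha^{j}$ for $j\neq 0$ is fine. The gap is in the final numerical step, which for an explicit-constant lemma is the actual content.

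Your constant is miscomputed: $1.4\cdot 30^5\cdot 2^{4.5}\cdot 8(1+\log 2)\approx 7.70\cdot 10^{8}\cdot 8\cdot 1.693\approx 1.04\cdot 10^{10}$, not $4.3\cdot 10^{9}$. With the correct value, your absorption $1+\log(n-m+2)<2.5\log(n-m+2)$, justified only by $n-m+2\ge 2$, yields roughly $n-k<2.6\cdot 10^{10}\log b\log(n-m+2)+O(1)$. That does not give the stated $2.3\cdot 10^{10}$. At $n-m+2=2$ the ratio $(1+\log 2)/\log 2\approx 2.44$ alone pushes the constant above $2.5\cdot 10^{10}$, so there is no ``room to spare''.

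The repair uses a fact you mention only in passing. For $n>1500$ one has $L_n\ge 2L_m$, so $n>m$ and hence $n-m+2\ge 3$. This gives $1+\log(n-m+2)\le 1.92\log(n-m+2)$, and then $n-k<2.0\cdot 10^{10}\log b\log(n-m+2)+\log(cb)/\log\alpha$. The additive term is at most $8$ for $c\le 3$, $b\le 10$, and it is swallowed because $\log b\log(n-m+2)\ge\log 2\log 3>0.76$.

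This is exactly what the paper does implicitly. Its intermediate bound $\log|\Gamma_1|>-1.1\cdot 10^{10}\log b\log(n-m+2)$ is only valid for $n-m+2>2.3$, and $1.1\cdot 10^{10}/\log\alpha\approx 2.29\cdot 10^{10}$ is where the constant $2.3\cdot 10^{10}$ comes from.
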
 
	\begin{proof}  To prove this, we first rewrite \eqref{main1} as 
		\begin{align*} 
			\alpha^n +\beta^n &=(\alpha^m +\beta^m)b^d+F_k, 
			\\
			 \alpha^n-\alpha^mb^d &= \beta^mb^d-\beta^n +F_k.
		\end{align*}   
		Dividing through by $\alpha^n$ and taking absolutes on both sides, we get 
		\begin{align*}
			\left|1-\frac{b^d}{\alpha^{n-m}} \right| \leq \frac{\left|\beta^m \right|}{\alpha^n}b^d+\frac{\left|\beta^n \right|}{\alpha^n}+\frac{F_k}{\alpha^n}.
		\end{align*}   
		Note that $|\beta|<1, F_k<\alpha^{k-1}$ and so we can write 
		\begin{align*}
			\left|1-\frac{b^d}{\alpha^{n-m}}\right|&\leq \frac{b^d}{\alpha^n}+\frac{1}{\alpha^n}+\frac{\alpha^{k-1}}{\alpha^n} 
			=\frac{1+b^d+\alpha^{k-1}}{\alpha^n} \\
			&< \frac{\alpha^k+bF_k+\alpha^k}{\alpha^n}
			< \frac{2\alpha^k+b\alpha^{k-1}}{\alpha^n}\\ 
			&<\frac{(b+2)\alpha^k}{\alpha^n} 
			 \leq \frac{2b}{\alpha^{n-k}},
		\end{align*}   
		which holds for all $n>1500$. 
		  Fix \begin{align}\label{a}
			\Gamma_1:=\left|1-\frac{b^d}{\alpha^{n-m}} \right|<\frac{2b}{\alpha^{n-k}}.
		\end{align}
		  Clearly, $\Gamma_1\neq 0$ otherwise we would have $\alpha^{n-m}=b^d$, which is a contradiction since $\alpha^{n-m}$ is a unit in $\mathbb{Q}(\sqrt{5})$ while $b^d $ is not. Take 
		\begin{align*}
			\gamma_1 &:= b,      & b_1 &:= d,\\
			\gamma_2 &:= \alpha,& b_2 &:= -(n-m).
		\end{align*}
		 Moreover, since $\gamma_1$ and $\gamma_2$ belong to $\mathbb{Q}(\sqrt{5})$, we take $D:=2$.
		Note that
		\begin{align*}
			h(\gamma_1) = h(b) = \log b, \qquad
			h(\gamma_2) = \tfrac{1}{2}\log \alpha.
		\end{align*}		 
		Hence, we obtain $A_1 := 2\log b$ and $A_2 := \log \alpha$, with $t=2$.
		From \eqref{b}, we have
		\[
		B := \max\{d,\, n-m\} = n-m+2 > d.
		\]
Using Theorem \ref{log}, we get  \begin{align*}
			\log {|\Gamma_1|} &> -1.4\cdot 30^5\cdot 2^{4.5}\cdot 2^2 (1+\log 2)(1+\log{(n-m+2)})\cdot 2\log b \cdot \log \alpha, \\ &>-1.1 \cdot10^{10}\log {(n-m+2)}\log b, 
			\end{align*}   
			  Comparing this bound on $\log |\Gamma_1|$ with \eqref{a}, we get
			  \begin{equation}\label{p}
			n-k<2.3\cdot 10^{10}\log b \log {(n-m+2)},
	\end{equation}
 and the proof of Lemma \ref{lem3.1} is complete. 
 \end{proof}   

Next, we prove the following. 
\begin{lemma}\label{lem3.2}
		All solutions to the Diophantine equation \eqref{main1} satisfy \[n<4.4\cdot 10^{27}(\log b)^4. \]
	\end{lemma}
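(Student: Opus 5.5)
We form a second linear form in logarithms that brings in $F_k$ through its Binet formula, so that Matveev's bound controls $k$ in terms of $n-k$. Lemma \ref{lem3.1} already bounds $n-k$, so combining the two bounds $n$ itself.

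Write \eqref{main1} using $F_k=(\alpha^k-\beta^k)/\sqrt5$ and move the dominant terms to one side:
\[
\alpha^n-\alpha^m b^d-\frac{\alpha^k}{\sqrt5}=\beta^m b^d-\beta^n-\frac{\beta^k}{\sqrt5}.
\]
Grouping $\alpha^n-\alpha^k/\sqrt5=\alpha^n(1-\alpha^{k-n}/\sqrt5)$ and dividing by $\alpha^n(1-\alpha^{k-n}/\sqrt5)$ gives
\[
\Gamma_2:=\left|1-\frac{b^d\alpha^{m-n}}{1-\alpha^{k-n}/\sqrt5}\right|<\frac{c\,b^d}{\alpha^{n+m}}+\frac{c}{\alpha^n}.
\]
Here $c$ is a small absolute constant, and $1-\alpha^{k-n}/\sqrt5>1/2$ because $n\ge k+1$. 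Using $b^d<bF_k<b\alpha^{k-1}$ and $n>k+m-5$ from \eqref{7}, the right-hand side is $<c'b/\alpha^{m}$.

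That bound is poor when $m$ is small. So instead I will use the form that isolates $L_m$: write $L_n-F_k=L_mb^d$ and divide by $\alpha^n(1-\alpha^{k-n}/\sqrt5)$, which gives
\[
\left|1-\frac{L_m b^d\alpha^{-n}}{1-\alpha^{k-n}/\sqrt5}\right|<\frac{c}{\alpha^n}.
\]
The algebraic numbers are $\gamma_1=b$, $\gamma_2=\alpha$, $\gamma_3=L_m/(1-\alpha^{k-n}/\sqrt5)$, with $t=3$ and $D=2$. The exponents are $d$, $-n$ and $1$, so $B=n$.

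The main work is the height of $\gamma_3$. We have $h(L_m)\le \log(2\alpha^m)$, and
\[
h\bigl(1-\alpha^{k-n}/\sqrt5\bigr)\le \tfrac{(n-k)}{2}\log\alpha+\log\sqrt5+\log2 .
\]
Hence $A_3\ll m+(n-k)$. A bound containing $m$ is not good enough, so I will first bound $m$ through the form $\Gamma_1$ from Lemma \ref{lem3.1} in a different arrangement.

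The cleaner route is to use $L_n-L_mb^d=F_k$ directly: $|1-b^d\alpha^{m-n}|<2b\alpha^{k-n}$ and this is already $\Gamma_1$. To get $k$ small relative to $n$, I need a form whose right-hand side decays like $\alpha^{-n}$ and whose heights involve only $n-k$. Dividing $L_n - F_k = L_m b^d$ by $L_mb^d$ rather than by $\alpha^n$ gives
\[
\left|\alpha^{n}\bigl(1-\alpha^{k-n}/\sqrt5\bigr)b^{-d}\alpha^{-m}-1\right|\ll \alpha^{-m}.
\]
Symmetrically, isolating the $\alpha^m$-part produces a form with $A_3\ll (n-k)\log\alpha$ and right-hand side $\ll \alpha^{-\min(m,n)}$.

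So the plan is two cases. If $m$ is small, say $m\le 2.3\cdot10^{10}\log b\log(n-m+2)$, then $k>n-m-\cdots$ is also bounded in terms of $\log n$, and I will feed this into $\Gamma_1$ again: $n-m$ is then controlled, and $d<n-m+2$. If $m$ is large, Matveev applied to the form with $\gamma_3=1-\alpha^{k-n}/\sqrt5$ gives
\[
m\ll 30^6\,3^{4.5}\cdot 4(1+\log2)(1+\log n)\cdot 2\log b\cdot\log\alpha\cdot (n-k)\log\alpha .
\]
I then substitute Lemma \ref{lem3.1} for $n-k$. Together with $n<k+m+7\log b$, this yields $n\ll 10^{23}(\log b)^2\log^2 n$, up to constants.

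Finally, Lemma \ref{reduction} with $s=2$ turns $n<T(\log n)^2$ into $n<4T(\log T)^2$. I expect $\log T$ of size around $60+\log\log b$; bounding it by a multiple of $\log b$ gives the stated $(\log b)^4$ shape, $n<4.4\cdot10^{27}(\log b)^4$.

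The main obstacle is choosing the arrangement so that the third height depends only on $n-k$ (or on $n-m$) and not on $m$. I will try the division by $\alpha^n(1-\alpha^{k-n}/\sqrt5)$ first and check nondegeneracy of $\Gamma$ via a Galois conjugation argument.
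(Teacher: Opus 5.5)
Your proposal has a genuine gap. You write down the right linear form, then abandon it for a case analysis that never produces a bound on $n$. The form you found,
\[
\Gamma_2=\Bigl|1-\frac{L_mb^d\alpha^{-n}}{1-\alpha^{k-n}/\sqrt5}\Bigr|<\frac{4}{\alpha^n},\qquad \gamma_3=\frac{L_m}{1-\alpha^{k-n}/\sqrt5},
\]
is exactly the one the paper uses. The idea you are missing is that the $m$ in $h(\gamma_3)$ costs nothing. The left inequality in \eqref{7}, $n>k+m-5$, gives $m<n-k+5$, so
\[
h(\gamma_3)<2\log2+\tfrac12\log5+\tfrac{2(n-k)+5}{2}\log\alpha,
\]
and hence $A_3\ll(n-k)\log\alpha$. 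Lemma \ref{lem3.1} then gives $A_3\ll\log b\,\log n$. One application of Theorem \ref{log} to $\Gamma_2$ yields $n<1.7\cdot10^{23}(\log b)^2(\log n)^2$, and Lemma \ref{reduction} with $s=2$ finishes. There is no ``large $m$'' case, since $m\le n-k+4$ always.

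The argument you substitute fails in both branches.

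\textbf{Small-$m$ branch.} What \eqref{7} gives here is $n-k<m+7\log b$, which concerns $n-k$ only. Your statement $k>n-m-7\log b$ is a lower bound on $k$, not an upper bound. Returning to $\Gamma_1$ cannot help either: its error term is $2b\alpha^{k-n}$, so Matveev applied to it controls $n-k$, not $n-m$ or $n$.

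\textbf{Large-$m$ branch.} After dividing by $\alpha^mb^d$, the error contains $|\beta^m/\alpha^m|=\alpha^{-2m}$, so Theorem \ref{log} can only bound $m$. The step ``together with $n<k+m+7\log b$'' then needs an upper bound on $k$, which you never obtain; in fact $k$ is comparable to $n$.

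By \eqref{7}, $m$ and $n-k$ differ by less than $5+7\log b$, so bounding either one gives the same information. Bounding $n$ itself requires a form whose error decays like $\alpha^{-n}$, and that form is $\Gamma_2$.

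A minor point: $n\ge k+1$ is false in general, for example $(n,m,k,b,d)=(4,1,4,2,2)$. However, $k\le n$ already gives $1-\alpha^{k-n}/\sqrt5\ge 1-1/\sqrt5>1/2$, which is all you need.
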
 \begin{proof}
		  Again, we rewrite equation \eqref{main1} as \begin{align*}
			\alpha^n +\beta^n &= L_m\cdot b^d + \frac{\alpha^k -\beta ^k}{\sqrt{5}}, \\ \alpha^n-\frac{\alpha^k}{\sqrt{5}}-L_m \cdot b^d&=\frac{-\beta^k}{\sqrt{5}}-\beta^n, \\ \alpha^n \left(1-\frac{\alpha^{k-n}}{\sqrt{5}}\right)-L_m\cdot b^d &=\frac{-\beta^k}{\sqrt{5}}-\beta^n.
		\end{align*}
		  Taking absolutes on both sides and dividing through by $\alpha^n \left(1-\alpha^{k-n}/\sqrt{5}\right)$ gives  
		\begin{align*}
		\left| 1-\frac{L_m\cdot b^d}{\alpha^n \left(1-\dfrac{\alpha^{k-n}}{\sqrt{5}}\right)}\right| &\leq \frac{\frac{|\beta^k|}{\sqrt{5}}+|\beta^n|}{\alpha^n \left(1-\dfrac{\alpha^{k-n}}{\sqrt{5}}\right)}. \end{align*}  
		Notice that $\beta=-\alpha^{-1}$, so $|\beta|<1$ since $\alpha>1$.
		  We therefore have 
		\begin{align*}
		\left| 1-\frac{L_m\cdot b^d}{\alpha^n \left(1-\dfrac{\alpha^{k-n}}{\sqrt{5}}\right)}\right|&<\frac{2}{\alpha^n}\cdot \frac{1}{\left(1-\dfrac{\alpha^{-(n-k)}}{\sqrt{5}}\right)}.
		\end{align*}   
		The quantity 
		\begin{align*}
		\frac{1}{\left(1-\dfrac{\alpha^{-(n-k)}}{\sqrt{5}}\right)}&=\frac{\sqrt{5}}{\sqrt{5}-\alpha^{-(n-k)}}<\frac{\sqrt{5}}{\sqrt{5}-\alpha^{-0}}=\frac{\sqrt{5}}{\sqrt{5}-1}<2,
		\end{align*}  
		so we have 
		\begin{align}\label{gamma2}
			\Gamma_2:= \left| 1-\frac{L_m\cdot b^d}{\alpha^n \left(1-\dfrac{\alpha^{k-n}}{\sqrt{5}}\right)}\right| < \frac{4}{\alpha^n}.
		\end{align}   
		We note that $\Gamma_2\neq 0$, otherwise we would have 
		$$\alpha^n-\frac{\alpha^k}{\sqrt{5}}=L_m\cdot b^d,$$ 
		which is not true because the left hand side is irrational for $n>1500$ while the right hand side is an integer. Take the field $\mathbb{Q}(\sqrt{5})$ of degree $D=2$ and $t=3$.
		\begin{align*}
			\gamma_1 &:= b, 
			& \gamma_2 &:= \alpha,
			& \gamma_3 &:= \dfrac{L_m}{\left(1-\dfrac{\alpha^{k-n}}{\sqrt{5}}\right)},\\[0.3em]
			b_1 &:= d,
			& b_2 &:= -n,
			& b_3 &:= 1.
		\end{align*}
		As before, we take $A_1 := 2\log b$ and $A_2 := \log \alpha$.		 
		Observe that
		\begin{align*}
			h(\gamma_3)
			&\le h(L_m)+h\!\left(1-\dfrac{\alpha^{k-n}}{\sqrt{5}}\right), \\
			&< h(2\alpha^m)+h(1)+|n-k|\,h(\alpha)+h(\sqrt{5})+\log 2, \\
			&\le \log 2 + \tfrac{m}{2}\log \alpha + \tfrac{n-k}{2}\log \alpha
			+ \tfrac{1}{2}\log 5 + \log 2.
		\end{align*}
		From \eqref{7}, \(n-k+5>m\), and thus
		\begin{align*}
			h(\gamma_3)
			< 2\log 2 + \tfrac{1}{2}\log 5 + \tfrac{2(n-k)+5}{2}\log \alpha.
		\end{align*}
		Therefore,
		\begin{align*}
			A_3
			&= 4\log 2 + \log 5 + (2(n-k)+5)\log \alpha, \\
			&< 4\log 2 + \log 5
			+ \bigl(2(2.3\cdot 10^{10}\log b \log (n-m+2))+5\bigr)\log \alpha, \\
			&< 2.3\cdot 10^{10}\log b \log (n-m+2).
		\end{align*}		 
		Taking $B := n-m+2$, we proceed via Theorem \ref{log} and obtain
		\begin{align*}
			\log {|\Gamma_2|}
			&> -1.4\cdot 30^6 \cdot 3^{4.5}\cdot 2^2 (1+\log 2)(1+\log (n+2)) \cdot 2\log b \cdot \log \alpha
			\cdot 2.3\cdot 10^{10} \log b \log (n-m+2), \\
			&> -8.1\cdot 10^{22} (\log n)^2 (\log b)^2 ,
		\end{align*}		 
		where we have used the facts that
		\begin{align*}
			\log (n-m+2)&< \log (n+2)< 3\log n,
		\end{align*}
		and
		\begin{align*}
		1+\log (n+2)
		&< 1+3\log n< 5\log n.
		\end{align*}
		Comparing the bounds for \(\log |\Gamma_2|\) with \eqref{gamma2}, we obtain
		\begin{align}\label{guz}
			n < 1.7\cdot 10^{23} (\log n)^2 (\log b)^2.
		\end{align}
Next, we apply Lemma \ref{reduction} to \eqref{guz} with parameters $Z=n, s=2$ and $T=1.7\cdot 10^{23}(\log b)^2$. We obtain 
 \begin{align}\label{q}
	n<4.4\cdot 10^{27}(\log {b})^4,
	\end{align}
and the proof of Lemma \ref{lem3.2} is complete.	
\end{proof}

	  We proceed to reduce the bounds on $n.$  Recall that 
	  \begin{align*}
		\Gamma_1=\left|1-\frac{b^d}{\alpha^{n-m}} \right|= \left|e^{\Lambda_1}-1\right|. 
	\end{align*}   
	We already showed that $\Gamma_1\neq 0$ and thus $\Lambda_1\neq 0$. Since $|\Gamma_1|<1/2$ whenever $n-k>8$, then \[|\log{(1+\Gamma_1)}|<\frac{3}{2}|\Gamma_1|<\frac{3}{2}\left(\frac{2b}{\alpha^{(n-k)}}\right)=\frac{3b}{\alpha^{n-k}}.\] 
	Therefore
	 \begin{align*}
		\Lambda_1:=|d\log b -(n-m)\log \alpha|< \frac{3b}{\alpha^{n-k}},
	\end{align*}   
	and so we can write
	 \begin{align*}
		0<\left|\frac{\log b}{\log \alpha} -\frac{n-m}{d}\right|< \frac{3b}{\alpha^{n-k}\cdot d\log \alpha}.
	\end{align*}   
	Using the fact that $b\leq 10$ and combining \eqref{q} with \eqref{b}, we obtain an upper bound for $d$ as \begin{align*}
		d<n-m+2<n+2<1.3\cdot 10^{29}.
	\end{align*}  
	Proceeding via Lemma \ref{legendre}, we have that $(n-m)/d$ is a convergent of the continued fraction expansion of $\log b/\log \alpha$ for $2\leq b\leq 10$. Define
	 \[
	\tau := \dfrac{\log b}{\log \alpha}, \qquad
	u := d, \qquad
	M := 1.3 \cdot 10^{29}.
	\] 
	Let $q_i$ be the denominator of the $i$-th convergent of the continued fraction of $\tau$. By applying a property of continued fractions, we write \begin{align*}
		\frac{1}{(a_{max}+2)d^2}\leq \frac{1}{(a_i+2)d^2}\leq \left|\frac{\log b}{\log \alpha} -\frac{n-m}{d}\right|< \frac{7b}{d\cdot \alpha^{n-k}}.
	\end{align*}    A simple computer program in SageMath yields the following results.
	
	\begin{center}
		\begin{tabular}{|c|c|c|c|c|c|c|c|c|c|}
			\hline
			$b$ & $2$ & $3$ & $4$ & $5$ & $6$ & $7$ & $8$ & $9$ & $10$ \\ \hline
			$q_i$ & $q_{67}$ & $q_{62}$ & $q_{64}$ & $q_{59}$ & $q_{57}$ & $q_{67}$ & $q_{59}$ & $q_{58}$ & $q_{61}$ \\ \hline
			$q_i>$ & $2\cdot 10^{29}$ & $1.2\cdot10^{30}$ & $1.6\cdot10^{29}$ & $2.9\cdot10^{29}$ & $7.1\cdot 10^{29}$ & $3\cdot 10^{29}$ & $2.4\cdot 10^{29}$ & $1.1\cdot 10^{30}$ & $2.6\cdot 10^{30}$ \\ \hline
			$a(M)$ & $a_{18}$ & $a_{43}$ & $a_{18}$ & $a_{21}$ & $a_{47}$ & $a_{14}$ & $a_{16}$ & $a_{39}$ & $a_{36}$ \\ \hline
			$a(M)$ & $134$ & $161$ & $66$ & $59$ & $347$ & $35$ & $44$ & $80$ & $106$ \\ \hline
		\end{tabular}
	\end{center}	
	  In all these cases, we have
	\begin{align*}
		n-k < \frac{\log {(7b(347+2)\cdot 1.3\cdot 10^{29})}}{\log \alpha} < 161,
	\end{align*}
	  which implies that
	$	m<n-k+5 < 166$.
	
	  Next, we go back to \eqref{gamma2} and write
	\begin{align*}
		\Lambda_2:= d\log b -n\log \alpha + \log{\left(\frac{L_m}{1-\frac{\alpha^{k-n}}{\sqrt{5}}}\right)}.
	\end{align*}
  Assume for a moment that $n \geq 3$ so that $|\Gamma_2| < 0.5$. Then
\begin{align*}
	\Lambda_2 	= \left| \log(1+\Gamma_2) \right| < \frac{3}{2} |\Gamma_2| < \frac{3}{2} \left(\frac{4}{\alpha^n}\right) = \frac{6}{\alpha^n},
\end{align*}
and so,
	\begin{align*}
		0<\left|d\frac{\log b}{\log \alpha}-n+\frac{\log{\left(\frac{L_m}{1-\frac{\alpha^{k-n}}{\sqrt{5}}}\right)}}{\log \alpha}\right|<\frac{6/\log \alpha}{\alpha^n}.
	\end{align*}
	  We can now apply Lemma \ref{duj} with parameters  \[
	w := n, \qquad
	A := \dfrac{6}{\log \alpha}, \qquad
	B := \alpha , 
	\] $M:=1.3\cdot 10^{39}>n+2>n-m+2>d$, and 
	 \[\tau:=\frac{\log b}{\log \alpha},\qquad \mu:=\frac{\log{\left(\frac{L_m}{1-\frac{\alpha^{k-n}}{\sqrt{5}}}\right)}}{\log \alpha}. \]
	Lets denote with $q_i$ the denominator of the $i$-th convergent of the continued fraction of $\tau.$ 
	We implement the algorithm of Lemma \ref{duj} in SageMath and for each $2\leq b\leq 10$, $1\leq m\leq 166$ and $3\leq n-k < 161,$ we got the largest numerical bounds for $w:=n$ for the following data.
	\begin{center}
		\begin{tabular}{|c|c|c|c|c|c|c|c|c|c|}
			\hline
			$b$ & $2$ & $3$ & $4$ & $5$ & $6$ & $7$ & $8$ & $9$ & $10$ \\ \hline
			$m$ & $87$ & $3$ & $87$ & $10$ & $52$ & $69$ & $39$ & $107$ & $2$ \\ \hline
			$n-k>$ & $95$ & $38$ & $95$ & $111$ & $155$ & $66$ & $140$ & $126$ & $66$ \\ \hline
			$q_i$ & $q_{87}$ & $q_{74}$ & $q_{86}$ & $q_{83}$ & $q_{81}$ & $q_{83}$ & $q_{78}$ & $q_{72}$ & $q_{87}$ \\ \hline
			$\varepsilon>$ & $0.0001$ & $0.00002$ & $0.0001$ & $0.00001$ & $2\cdot 10^{-7}$ & $0.00002$ & $0.00001$ & $0.00003$ & $0.0004$ \\ \hline
			$n<$ & $223$ & $221$ & $223$ & $223$ & $229$ & $219$ & $221$ & $219$ & $220$ \\ \hline 
		\end{tabular}
	\end{center}
	
	  Notice that in all cases, we have $n\leq 229$ holds. This contradicts our working assumption that $n> 1500$, and the proof of Theorem \ref{thm1} is done.
	  
	\subsection{Proof of Theorem \ref{thm2}} 
	
	Note that when $n\leq 1500$, we have that $m+k <n+5$ via \eqref{T2} and $d<n-m+3$ via \eqref{eq:d}. A quick computation using SageMath yields only the solutions stated in Theorem \ref{thm2}. Hence from now on we may assume $n>1500$.
	
	 We proceed to examine Equation \eqref{main2} in two different ways. First, we prove the following Lemma.
	
	\begin{lemma}\label{lem3.3}
		All solutions to the Diophantine equation \eqref{main2} satisfy \begin{align*}
			n-k<8.0\cdot 10^{12}\log b (1+\log {(n-m+3)}).
		\end{align*}
	\end{lemma}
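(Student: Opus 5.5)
We mimic the proof of Lemma \ref{lem3.1}, now isolating the large terms $\alpha^n$ and $F_m b^d$ in \eqref{main2}. Using the Binet formulae, we rewrite \eqref{main2} as
\begin{align*}
\alpha^n-\frac{\alpha^m b^d}{\sqrt{5}} = -\beta^n-\frac{\beta^m b^d}{\sqrt{5}}+L_k .
\end{align*}
We then divide through by $\alpha^n$ and take absolute values. The right-hand side is bounded using $|\beta|<1$, $L_k\le 2\alpha^k$ and, by \eqref{s}, $b^d\le bL_k\le 2b\alpha^k$. This gives
\begin{align*}
\Gamma_3:=\left|1-\frac{b^d\alpha^{m-n}}{\sqrt{5}}\right| < \frac{1+2b\alpha^k/\sqrt{5}+2\alpha^k}{\alpha^n}<\frac{4b}{\alpha^{n-k}},
\end{align*}
with a harmless constant such as $4b$ on the right.

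Next we check that $\Gamma_3\neq0$. If it vanished, then $b^d\alpha^{m-n}=\sqrt{5}$. Squaring gives $b^{2d}\alpha^{2(m-n)}=5$, so $\alpha^{2(m-n)}$ would be rational. This forces $m=n$ and $b^{2d}=5$, which is impossible.

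We then apply Theorem \ref{log} with $t=3$ and $D=2$ in the field $\mathbb{Q}(\sqrt5)$, taking
\begin{align*}
\gamma_1&:=b, & \gamma_2&:=\alpha, & \gamma_3&:=\sqrt{5},\\
b_1&:=d, & b_2&:=m-n, & b_3&:=-1.
\end{align*}
The heights are $h(b)=\log b$, $h(\alpha)=\tfrac12\log\alpha$ and $h(\sqrt5)=\tfrac12\log5$, so we may take $A_1:=2\log b$, $A_2:=\log\alpha$ and $A_3:=\log 5$. For the exponent bound we need $B\ge\max\{d,n-m,1\}$. Since $d<n-m+3$ from \eqref{eq:d}, we can take $B:=n-m+3$.

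Matveev's theorem then gives
\begin{align*}
\log|\Gamma_3|>-1.4\cdot30^6\cdot3^{4.5}\cdot 2^2(1+\log2)\bigl(1+\log(n-m+3)\bigr)\cdot2\log b\cdot\log\alpha\cdot\log5 .
\end{align*}
The constant here is roughly $3.8\cdot10^{12}\log b\,(1+\log(n-m+3))$. Comparing with the upper bound $\log|\Gamma_3|<\log(4b)-(n-k)\log\alpha$ and dividing by $\log\alpha\approx0.48$ gives a coefficient of about $8\cdot10^{12}$. The additive term $\log(4b)/\log\alpha$ is absorbed into the main term, since $\log b\,(1+\log(n-m+3))\ge\log 2$. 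This yields $n-k<8.0\cdot10^{12}\log b\,(1+\log(n-m+3))$.

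The main obstacle is the bookkeeping. We must choose the grouping so that only three algebraic numbers appear and the heights stay independent of $n,m,k$. Writing $F_m$ via Binet and keeping $\sqrt5$ as a separate $\gamma_3$ achieves this. We must also confirm that the numerical constant from Theorem \ref{log}, after division by $\log\alpha$ and absorption of the $\log(4b)$ term, indeed stays below $8.0\cdot10^{12}$. If it overshoots, the fallback is to merge $\sqrt5$ with $b$ into $\gamma_1=b^d/\sqrt5$; this is not possible with a fixed exponent, so instead we would tighten the choices of the $A_i$.
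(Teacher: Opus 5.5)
Your argument matches the paper's proof: the same linear form $\Gamma_3=|1-b^d\alpha^{m-n}/\sqrt5|$, the same algebraic numbers $b,\alpha,\sqrt5$ in $\mathbb{Q}(\sqrt5)$ with $A_1=2\log b$, $A_2=\log\alpha$, $A_3=\log 5$ and $B=n-m+3$, and your $4b/\alpha^{n-k}$ bound and squaring argument for $\Gamma_3\neq 0$ are valid alternatives to the paper's $12/\alpha^{n-k}$ bound and its irrationality remark. One arithmetic slip, which works in your favour: $1.4\cdot 30^6\cdot 3^{4.5}\cdot 2^2(1+\log 2)\cdot 2\log\alpha\,\log 5\approx 1.50\cdot 10^{12}$ (the paper uses $1.51\cdot 10^{12}$), not $3.8\cdot 10^{12}$, so dividing by $\log\alpha$ gives roughly $3.1\cdot 10^{12}$, $8.0\cdot 10^{12}$ holds with ample room, and your fallback is unnecessary.
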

	\begin{proof}
		We rewrite equation \eqref{main2} as \begin{align*}
			\alpha^n +\beta^n &= b^d \left(\frac{\alpha^m-\beta^m}{\sqrt{5}}\right)+L_k \\ \alpha^n +\beta^n &= \frac{\alpha^mb^d}{\sqrt{5}}-\frac{\beta^mb^d}{\sqrt{5}} +L_k \\ \alpha^n-\frac{\alpha^mb^d}{\sqrt{5}}&=L_k-\frac{\beta^mb^d}{\sqrt{5}}-\beta^n.
		\end{align*}  Dividing through by $\alpha^n$ and taking absolutes on both sides yields \begin{align*}
			\left|1-\frac{b^d}{\alpha^{n-m}\sqrt{5}}\right|\leq \frac{L_k}{\alpha^n}+ \frac{b^d|\beta^m|}{\alpha^n\sqrt{5} }+\frac{|\beta^n|}{\alpha^n}.
		\end{align*}
		  Since $|\beta|<1$, we get \begin{align*}
			\left|1-\frac{b^d}{\sqrt{5}\alpha^{n-m}}\right|&\leq \frac{2\alpha^k}{\alpha^n}+\frac{b^d}{\sqrt{5}\alpha^{n}}+\frac{1}{\alpha^{n}} \\
			&\leq \frac{2}{\alpha^{n-k}}+\frac{2b\alpha^k}{\sqrt{5}\alpha^{n}}+\frac{1}{\alpha^n}\\&<\frac{2}{\alpha^{n-k}}+\frac{20}{\sqrt{5}\alpha^{n-k}}+\frac{1}{\alpha^{n-k}}\\&<\frac{12}{\alpha^{n-k}}.
			\end{align*} 
		  Fix \begin{align}\label{gamma3}
			\Gamma_3:=\left|1-\frac{b^d}{\sqrt{5}\alpha^{n-m}}\right|<\frac{12}{\alpha^{n-k}}.
		\end{align}
		  Suppose $\Gamma_3=0,$ then $\sqrt{5}\alpha^{n-m}=b^d$. This is impossible because the right-hand side is rational while the left-hand side is irrational for $n>1500$. Thus $\Gamma_3\neq 0.$ Take \begin{align*}
			\gamma_1 &:= b, & \gamma_2 &:= \sqrt{5}, & \gamma_3 &:= \alpha, \notag\\
			b_1 &:= d, & b_2 &:= -1, & b_3 &:= -(n-m).
		\end{align*}
		  We know that $\gamma_1,\gamma_2$ and $\gamma_3$ are elements of $\mathbb{Q}(\sqrt{5})$ of degree $D:=2.$ Using the properties of height of algebraic numbers, we get 
		  \[h(\gamma_1)=h(b)=\log b, \quad h(\gamma_2)=h(\sqrt{5})=\dfrac{1}{2}\log 5,
		  \]
		   and 
		  \[
		  h(\gamma_3)=h(\alpha)=\dfrac{1}{2}\log \alpha.
		  \] 
		  Thus, we take \[A_1:=2\log b, \quad A_2:=\log 5 \quad \text{ and } A_3:=\log \alpha.\] Using the fact that $\max\{d,1,n-m\}<n-m+3,$ we take $B:=n-m+3.$ We proceed to apply Theorem \ref{log} with $t:=3$ and obtain
		  \begin{align*}
			\log {|\Gamma_3|}&>-1.4\cdot 30^6\cdot 3^{4.5}\cdot 2^2(1+\log 2)(1+\log{(n-m+3)})(2\log b)(\log 5)(\log \alpha), \\&>-1.51\cdot 10^{12}\log b (1+\log {(n-m+3)}).
		\end{align*}   Also from \eqref{gamma3}, we have that \begin{align*}
			\log {|\Gamma_3|} < \log {(4b)}-\log {\alpha^{n-k}}.
		\end{align*}   A comparison of these two inequalities yields 
		\begin{align}\label{T1}
			n-k<8.0\cdot 10^{12}\log b (1+\log {(n-m+3)}),
		\end{align}   
		which is valid for $2\leq b\leq 10.$  
	\end{proof}
	  Next we prove the following. 
	  \begin{lemma}
		All solutions to the Diophantine equation \eqref{main2} satisfy \[n<6.3\cdot 10^{30}(\log b)^4.\]  
		\end{lemma}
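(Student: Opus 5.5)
The plan mirrors the proof of Lemma \ref{lem3.2}: isolate the Lucas term $L_k$ together with $\alpha^n$ so that the remaining error is only of size $\alpha^{-n}$. The resulting three-term linear form has one coefficient whose height depends on $n-k$, and Lemma \ref{lem3.3} controls that height.

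First, using the Binet formulae, rewrite \eqref{main2} as
\[
\alpha^n\left(1-\alpha^{k-n}\right)-F_m b^d=\beta^k-\beta^n .
\]
Dividing by $\alpha^n(1-\alpha^{k-n})$ and using $|\beta|<1$ gives
\[
\Gamma_4:=\left|1-\frac{F_m b^d}{\alpha^n\left(1-\alpha^{k-n}\right)}\right|<\frac{2}{\alpha^n}\cdot\frac{1}{1-\alpha^{-(n-k)}}\le\frac{2}{\alpha^n}\cdot\frac{1}{1-\alpha^{-1}}<\frac{6}{\alpha^n}.
\]
This uses $n-k\ge1$, which was established in Subsection \ref{prel2} for $m>0$ (the case $m=0$ being trivial).

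Next we must check $\Gamma_4\neq0$. If it vanished, we would have $\alpha^n-\alpha^k=F_m b^d\in\mathbb{Z}$. Applying the Galois conjugation $\sqrt5\mapsto-\sqrt5$ would give $\alpha^n-\alpha^k=\beta^n-\beta^k$. The left side is at least $\alpha^{n-1}$, which is huge for $n>1500$, while the right side has absolute value at most $2$, a contradiction.

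We then apply Theorem \ref{log} with $t=3$ and $D=2$, taking
\[
\gamma_1=b,\quad \gamma_2=\alpha,\quad \gamma_3=\frac{F_m}{1-\alpha^{k-n}},\qquad b_1=d,\quad b_2=-n,\quad b_3=1 .
\]
As before, $A_1=2\log b$ and $A_2=\log\alpha$. For the third height,
\[
h(\gamma_3)\le h(F_m)+h(1-\alpha^{k-n})\le \log F_m+\tfrac{n-k}{2}\log\alpha+\log 2 .
\]
From \eqref{T2} we have $m<n-k+5$, so $\log F_m<(m-1)\log\alpha<(n-k+4)\log\alpha$. Hence $A_3$ can be taken of the form $c_1(n-k)+c_2$. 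Inserting Lemma \ref{lem3.3} then gives $A_3<c\cdot 10^{13}\log b\,(1+\log(n-m+3))$. For the exponents we take $B:=n$; indeed $d<n-m+3\le n+2$, and a harmless adjustment such as $B=n+2$ works if needed.

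Matveev's bound then gives
\[
\log\Gamma_4>-C(\log b)^2(\log n)^2,
\]
where $C\approx 1.4\cdot30^6\cdot3^{4.5}\cdot4(1+\log2)\cdot2\log\alpha$ times the constant in $A_3$. Here we absorb $1+\log(n+2)$ and $1+\log(n-m+3)$ into multiples of $\log n$, which is valid for $n>1500$. Comparing with $\Gamma_4<6\alpha^{-n}$ yields
\[
n<C'(\log n)^2(\log b)^2,\qquad C'\text{ of order }10^{26}.
\]
Finally, Lemma \ref{reduction} with $s=2$, $Z=n$ and $T=C'(\log b)^2$ gives $n<4T(\log T)^2$. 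Since $\log T$ is about $60+2\log\log b$, this quantity is bounded by a constant times $(\log b)^4$ after using $\log b\ge\log2$ to convert the $\log\log b$ terms.

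The main obstacle is the bookkeeping of numerical constants. We must verify that the product of Matveev's constant, the $10^{12}$-scale constant of Lemma \ref{lem3.3}, and the Lemma \ref{reduction} factor $4(\log T)^2\approx 4\cdot 62^2$ indeed stays below $6.3\cdot10^{30}$. We also have to justify absorbing $\log\log b$ into a power of $\log b$ uniformly for $2\le b\le10$. Showing $\Gamma_4\ne0$ and bounding the height of $\gamma_3$ are routine by comparison.
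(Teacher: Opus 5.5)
Your proposal is correct and follows essentially the same route as the paper: the same linear form $\Gamma_4<6\alpha^{-n}$ with $\gamma_3=F_m/(1-\alpha^{k-n})$, the height bound $A_3\approx(3(n-k)+8)\log\alpha+2\log 2$ fed by Lemma~\ref{lem3.3}, Matveev's bound giving $n<1.9\cdot10^{26}(\log n)^2(\log b)^2$, and Lemma~\ref{reduction} with $s=2$. The numerical check you defer does go through: the binding case is $b=2$, where $4T(\log T)^2\approx1.3\cdot10^{30}<6.3\cdot10^{30}(\log 2)^4\approx1.45\cdot10^{30}$. Your Galois-conjugation argument for $\Gamma_4\neq0$ is a cleaner version of the paper's irrationality remark, though the left side there is only guaranteed to be at least $\alpha^{n-2}$, not $\alpha^{n-1}$, which is harmless.
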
 
		\begin{proof}
		Here, we examine the Diophantine equation \eqref{main2} in the form 
		\begin{align*}
			\alpha^n+\beta^n=b^dF_m+\alpha^k +\beta^k,
		\end{align*}   
		which leads to $
			\alpha^n-\alpha^k-b^dF_m=\beta^k-\beta^n$.  
		Thus \begin{align*}
			\alpha^n\left(1-\alpha^{k-n}\right)-b^dF_m=\beta^k-\beta^n.
		\end{align*}   Dividing through by $\alpha^n\left(1-\alpha^{k-n}\right)$ and taking absolutes on both sides gives \begin{align*}
			\left|1-\frac{b^dF_m}{\alpha^n\left(1-\alpha^{k-n}\right)}\right|&\leq \frac{1}{\alpha^n\left(1-\alpha^{k-n}\right)}\left(|\beta^k|+|\beta^n|\right)\\ &<\frac{2}{\alpha^n\left(1-\alpha^{-(n-k)}\right)} \\ 
			&\le \frac{2}{\alpha^n\left(1-\alpha^{-1}\right)}
			<\frac{6}{\alpha^n}.
		\end{align*} 
		  Fix \begin{align}\label{T3}
			\Gamma_4:=\left|1-\frac{b^dF_m}{\alpha^n\left(1-\alpha^{k-n}\right)}\right|<\frac{6}{\alpha^n},
		\end{align}   
		$\Gamma_4\neq 0$ via a similar argument for $\Gamma_2$. Thus we take the field containing the following $\gamma_i$'s as $\mathbb{Q}(\sqrt5)$ of degree $D=2$,
		\begin{align*}
		\gamma_1 &:= b, & \gamma_2 &:= \alpha, & \gamma_3 &:= \dfrac{F_m}{1-\alpha^{k-n}}, \notag\\
		b_1 &:= d, & b_2 &:= n, & b_3 &:= 1.
		\end{align*}
		The heights of the algebraic numbers $\gamma_1, \gamma_2$ and $\gamma_3$ are defined respectively by \[h(\gamma_1)=\log b, \quad h(\gamma_2)=\frac{1}{2}\log \alpha,\]   and 
		\begin{align*}			
			h(\gamma_3)&=h\left(\frac{F_m}{1-\alpha^{k-n}}\right)\leq h(F_m)+h(1-\alpha^{k-n}) \\ 
			&\leq \log {(F_m)}+\frac{|k-n|}{2}\log \alpha+\log 2 \\
			&< |m-1|\log \alpha + \frac{|k-n|}{2}\log \alpha +\log 2.
			\end{align*} 
		  From inequality \eqref{T2}, $m-1<n-k+4$, hence it follows that 
		  \[h(\gamma_3)<\frac{3(n-k)+8}{2}\log \alpha+\log 2.\] 
		  Thus we take \[A_1:=2\log b, \qquad A_2:=\log \alpha, \qquad A_3:=(3(n-k)+8)\log \alpha+2\log 2.\] 
		We take $B:=n+3>n-m+3>d$ and $t:=3.$ Proceeding via Theorem \ref{log}, we get		
		\begin{align*}
			\log {|\Gamma_4|}>-1.4\cdot 30^6 \cdot 3^{4.5}\cdot 2^2 (1+\log 2)(1+\log{(n+3)})(2\log b)(\log \alpha)((3(n-k)+8)\log \alpha+2\log 2).
		\end{align*}  Also from \eqref{T3}, $
			\log {|\Gamma_4|} < \log 6-n\log \alpha$. Thus, we get
			 \begin{align}\label{T5}
			n<1.9\cdot 10^{26}(\log n)^2(\log b)^2,
		\end{align}  
		where we have used the fact that
		 \begin{align*}
			1+\log {(n+3)}&=1+\log {n\left(1+\frac{3}{n}\right)}<1+\log {2n}
			<2\log n,
		\end{align*}  
		and
		\begin{align*}
			3(n-k)+8 &< 3(8.0\cdot 10^{12}\log b (1+\log {(n-m+3)}))+8, \\&<3(8.0\cdot 10^{12}\log b (1+\log {(n+3)}))+8, \\&<3(8.0\cdot 10^{12}\log b (2\log n)+8, \\&<4.8\cdot 10^{13}\log b \log n,
		\end{align*} 
		via Lemma \ref{lem3.3}. 
		To get an upper bound of $n$ in terms of $b$ from \eqref{T5}, we use Lemma \eqref{reduction} by defining 
		\[
		Z:=n, \qquad s:=2, \qquad \text{and}\qquad T:=1.9\cdot 10^{26}(\log b)^2.
		\]   
		Lemma \eqref{reduction} gives
		 \[
		 n<2^2(1.9\cdot 10^{26}(\log b)^2)(\log {(1.9\cdot 10^{26}(\log b)^2)})^2.
		 \]  
		 Therefore, we get
		  \begin{align*}
			n<6.3\cdot 10^{30}(\log b)^4.
	\end{align*}
	\end{proof}
	  We proceed to reduce the bounds obtained. Put \begin{align*}
		\Lambda_3:= d\log b -(n-m)\log \alpha +\log {\frac{1}{\sqrt{5}}}.
	\end{align*}   If $n-k>10,$ and $2\leq b\leq 10,$ we deduce from equation \eqref{gamma3} that \begin{align*}
		\Gamma_3:= \left|e^{\Lambda_3}-1\right|<\frac{12}{\alpha^{n-k}}<\frac{1}{2}.
	\end{align*}  
	It follows that 
	\begin{align*}
		\left|\Lambda_3\right|<\frac{18}{\alpha^{n-k}}.
	\end{align*}   
	Combining this with \eqref{gamma3}, we have that 
	\begin{equation}\label{bd}
		0<\left|d\frac{\log b}{\log \alpha}-(n-m)+\frac{\log{\left(\frac{1}{\sqrt{5}}\right)}}{\log \alpha}\right|<\frac{(18/\log \alpha)}{\alpha^{n-k}}.
	\end{equation}   
	Since $2\leq b \leq10$, we have that $n<6.3\cdot 10^{30}(\log b)^4<1.8\cdot 10^{32}$. Now, we apply Lemma \ref{duj} to \eqref{bd} with the data
	$	w:=n-k$, $A:=18/\log \alpha$,  $B:=\alpha$, $M:=1.8\cdot 10^{32}$ and
	 \[\tau:=\frac{\log b}{\log \alpha}, \qquad \mu:= \frac{\log {\left(\frac{1}{\sqrt{5}}\right)}}{\log \alpha}.\]   Let $q_i$ be the denominator of the $i$-th convergent of the continued fraction of $\tau.$ We applied Lemma \ref{duj} with SageMath and got the following results. 
	\begin{center}
		\begin{tabular}{|c|c|c|c|c|c|c|c|c|c|}
			\hline
			$b$ & $2$ & $3$ & $4$ & $5$ & $6$ & $7$ & $8$ & $9$ & $10$ \\ \hline
			$q_i$ & $q_{72}$ & $q_{64}$ & $q_{72}$ & $q_{69}$ & $q_{65}$ & $q_{74}$ & $q_{64}$ & $q_{60}$ & $q_{67}$ \\ \hline
			$\varepsilon>$ & $0.36$ & $0.33$ & $0.45$ & $0.42$ & $0.23$ & $0.34$ & $0.45$ & $0.47$ & $0.01$ \\ \hline
			$n-k<$ & $173$ & $171$ & $169$ & $171$ & $175$ & $171$ & $171$ & $171$ & $177$ \\ \hline
		\end{tabular}
	\end{center}
	In all cases, we can see that $n-k<177$ holds. Therefore we have $m\leq 182$ via \eqref{T2}.

	To proceed, we go back to \eqref{T3} and write
	  \[
	  \Lambda_4:=d\log b -n\log \alpha+\log \left(\frac{F_m}{\left(1-\alpha^{k-n}\right)}\right),
	  \]   
	  so that  
	  \[\left|\Gamma_4\right|:=\left|e^{\Lambda_4}-1\right|<\frac{6}{\alpha^n}.\]   Note that since $n > 1500$, we have $6/\alpha^n < 0.5$ which also implies that 
	  \begin{align}\label{sev}
		\left|\Lambda_4\right|<\frac{12}{\alpha^n}.
	\end{align}
	  Therefore, dividing both sides of \eqref{sev} by $\log \alpha$ which is positive, we get 
	  \begin{align*}
		0<\left|d\frac{\log b}{\log \alpha}-n+ \frac{\log { \left(\frac{F_m}{\left(1-\alpha^{k-n}\right)}\right)}}{\log \alpha}\right|<\frac{(12/\log \alpha)}{\alpha^n}.
	\end{align*}  
	Next, we take the following parameters 
	\[w:=n, \qquad A:=\frac{12}{\log \alpha}, \qquad B:=\alpha, \qquad M:=1.8\cdot 10^{32}>d,\]   
	and
	 \[
	 \tau:=\frac{\log b}{\log \alpha}, \qquad \mu:= \frac{\log \left(\frac{F_m}{\left(1-\alpha^{k-n}\right)}\right)}{\log \alpha},
	 \]   
	 so that we can use Lemma \ref{duj}. Let $q_i$ be the denominator of the $i$-th convergent of the continued fraction of $\tau$. Using SageMath, we applied Lemma \ref{duj} with the above parameters where $2 \leq b \leq10$, $0\leq m \leq 182$ and $10\leq n-k\leq 199$. We got the largest numerical bounds for $w:=n$ as shown below. \begin{center}
	\begin{tabular}{|c|c|c|c|c|c|c|c|c|c|}
		\hline
		$b$ & $2$ & $3$ & $4$ & $5$ & $6$ & $7$ & $8$ & $9$ & $10$ \\ \hline
		$q_i$ & $q_{77}$ & $q_{70}$ & $q_{77}$ & $q_{75}$ & $q_{75}$ & $q_{78}$ & $q_{72}$ & $q_{68}$ & $q_{77}$ \\ \hline
		$m>$ & $1$ & $1$ & $1$ & $1$ & $1$ & $1$ & $1$ & $1$ & $1$ \\ \hline
		$n-k$ & $200$ & $201$ & $201$ & $201$ & $200$ & $201$ & $201$ & $201$ & $201$ \\ \hline
		$\varepsilon>$ & $9\cdot 10^{-6}$ & $0.00001$ & $7\cdot 10^{-6}$ & $0.00001$ & $5\cdot 10^{-6}$ & $0.00001$ & $0.00002$ & $1.8\cdot 10^{-6}$ & $9.5\cdot 10^{-6}$ \\ \hline
		$n<$ & $210$ & $211$ & $210$ & $214$ & $208$ & $209$ & $211$ & $208$ & $209$ \\ \hline 
	\end{tabular}
	\end{center}
	Thus we can deduce that $n<214$ holds in all cases. This is a contradiction to our original assumption that $n>1500.$ This completes our proof of Theorem \ref{thm2}. \qed
	\section*{Conclusion}
	In this paper, we have completely determined all Lucas numbers that can be expressed as mixed base $b$ concatenations of Fibonacci and Lucas numbers for $2 \leq b \leq 10$, showing in each case that only finitely many such representations exist. These results extend earlier work on concatenations within single recurrence sequences to a mixed sequence and arbitrary base setting. A natural direction for future research is to investigate whether analogous finiteness results hold for mixed concatenations involving generalized linear recurrence sequences.
	\section*{Acknowledgments} 
	The first author thanks the Mathematics division of Stellenbosch university for funding his stay while working on this project. The second author gratefully acknowledges the financial support provided by the Erasmus+ Programme of the European Union for his mobility to the University of L'Aquila, Italy, where part of this work was completed.

	\section*{Address}
	$ ^{1} $ Mathematics Division, Stellenbosch University, Stellenbosch, South Africa. 
	
	Email: \url{hbatte91@gmail.com}
	\\
 	$ ^{2} $ Department of Mathematics,  Makerere University, Kampala, Uganda. 
	
	Email: \url{kaggwaprosper58@gmail.com}


\begin{thebibliography}{99}
		
		\bibitem{AT}
		Ad\'{e}dji, K.~N., Faye, M., \& Togb\'{e}, A. (2024).
		On the Diophantine equations $P_n=b^dQ_m+Q_k$ and $Q_n=b^dP_m+P_k$ involving Pell and Pell-Lucas numbers.
		\emph{Proceedings-Mathematical Sciences}, \textbf{134}(1), 14.
		
		\bibitem{KM}
		Ad\'{e}dji, K.~N., Bliznac Trebje\v{s}anin, M. (2024).
		On mixed $B$-concatenations of Pell and Pell--Lucas numbers which are Pell numbers.
		\emph{Mathematica Pannonica} \textbf{30}(1), 91--104.
		
		\bibitem{alan}
		Altassan, A., \& Alan, M. (2024).
		Fibonacci numbers as mixed concatenations of Fibonacci and Lucas numbers.
		\emph{Mathematica Slovaca}, \textbf{74}(3), 563--576.
		
		\bibitem{BW}
		Baker, A., \& W\"{u}stholz, G. (1993).
		Logarithmic forms and group varieties.
		\emph{Journal f\"{u}r die reine und angewandte Mathematik}, \textbf{442}, 19--62.
		
		\bibitem{banks}
		Banks, W.~D., \& Luca, F. (2005).
		Concatenations with binary recurrent sequences.
		\emph{Journal of Integer Sequences}, \textbf{8}(1), Art. 05.1.3, 19.
		
		\bibitem{bat}
		Batte, H. (2025).
		Lucas numbers that are palindromic concatenations of two distinct repdigits.
		\emph{Mathematica Pannonica}, \textbf{31}(1), 22--33.
		
		\bibitem{BravoLuca2014}
		Bravo, J.~J., \& Luca, F. (2014).
		Repdigits in k-Lucas sequences.
		\textit{Proceedings -- Mathematical Sciences}, \textbf{124}(2), 141--154.
		
		\bibitem{BugeaudMignotteSiksek2006}
		Bugeaud, Y., Mignotte, M., \& Siksek, S. (2006).
		Classical and modular approaches to exponential Diophantine equations I. 
		Fibonacci and Lucas perfect powers.
		\textit{Annals of Mathematics}, \textbf{163}(3), 969--1018.
		
		\bibitem{duj}
		Dujella, A., \& Peth\H{o}, A. (1998).
		A generalization of a theorem of Baker and Davenport.
		\emph{Quarterly Journal of Mathematics}, \textbf{49}(195), 291--306.
		
		\bibitem{guz}
		G{\'u}zman--Sanchez, S., \& Luca, F. (2014). Linear combinations of factorials and S-units in a binary recurrence sequence. {\it Annales Math{\'e}matiques du Qu{\'e}bec\/} {\bf  38}, 169--188.	
		
		\bibitem{matl}
		Matveev, E.~M. (2007).
		An explicit lower bound for a homogeneous rational linear form in logarithms of algebraic numbers II.
		\emph{Izvestiya: Mathematics}, \textbf{62}, 723--772.
		
		\bibitem{qu}
		Murty M. R., \& Esmonde, J. (2005). Problems in Algebraic Number Theory, second
		edition, \textit{Graduate Texts in Mathematics}, vol. 190, Springer-Verlag, New York.
		
		
		
	\end{thebibliography}
\end{document}